\newcommand\CC{{\mathbb C}}
\newcommand\KK{{\mathbb K}}
\newcommand\RR{{\mathbb R}}
\newcommand\ZZ{{\mathbb Z}}
\newcommand\TT{{\mathbb T}}
\newcommand\cS{\mathcal{S}}
\newcommand\cM{\mathcal{M}}
\newcommand\cF{\mathcal{F}}
\newcommand\cP{\mathcal{P}}
\newcommand\cB{{\mathcal B}}
\newcommand\SetOf[2]{\left\{\left.#1\vphantom{#2}\ \right|\ #2\vphantom{#1}\right\}}
\newcommand\smallSetOf[2]{\{{#1}\,|\,{#2}\}}
\DeclareMathOperator{\conv}{conv}
\DeclareMathOperator{\val}{val}
\DeclareMathOperator{\rank}{rk}
\DeclareMathOperator{\size}{\#}
\DeclareMathOperator{\closure}{cl}
\DeclareMathOperator{\Dr}{Dr}
\DeclareMathOperator{\Gr}{TGr}
\newcommand{\M}{\mathcal{M}}
\DeclareMathOperator{\lin}{lin}
\newcommand\cU{\mathcal{U}}
\newcommand{\bigslant}[2]{{\raisebox{.2em}{$#1$}\left/\raisebox{-.2em}{$#2$}\right.}}
\theoremstyle{plain}
    \newtheorem{theorem}{Theorem}
    \newtheorem{corollary}[theorem]{Corollary}
    \newtheorem{proposition}[theorem]{Proposition}
\theoremstyle{definition}
    \newtheorem{remark}[theorem]{Remark}
    \newtheorem{example}[theorem]{Example}
    \newtheorem{definition}[theorem]{Definition}
    \newtheorem{conjecture}[theorem]{Conjecture}
    \newtheorem{question}{Question}
\title{On local Dressians of matroids}
\author{Jorge Alberto Olarte, Marta Panizzut  and Benjamin Schr\"oter}
\address{
	Institut f\"ur Mathematik,
	FU Berlin,
	Arnimallee 2, 14195 Berlin, Germany \\
	  E-mail: olarte@zedat.fu-berlin.de
	  }
\address{
  Institut f{\"u}r Mathematik,
  TU Berlin,
  Str.\ des 17. Juni 136, 10623 Berlin, Germany \\
  E-mail: panizzut@math.tu-berlin.de
}
\address{
	Department of Mathematical Sciences, Binghamton University, Binghamton, NY 13902, USA \\
	E-mail:schroeter@math.binghamton.edu
}
\begin{document}
\begin{abstract}
We study the fan structure of Dressians $\Dr(d,n)$ and local Dressians
$\Dr(\cM)$ for a given matroid $\cM$. In particular we show that the fan
structure on $\Dr(\cM)$  given by the three term Pl\"ucker relations
coincides with the structure as a subfan of the secondary fan of the
matroid polytope $P(\cM)$. As a corollary, we have that a matroid
subdivision is determined by its 3-dimensional skeleton. We also prove that the
Dressian of the sum of two matroids is isomorphic to the product of the
Dressians of the matroids. Finally we focus on indecomposable matroids.  We show that binary matroids are indecomposable, and we provide a non-binary indecomposable matroid as a counterexample for the converse. 
\end{abstract}
\maketitle
\section{Introduction}
\noindent 
Let $\KK_p$ be an algebraically closed field of characteristic $p$ with a non-trivial,  non-archimedean valuation. 
The {\sl tropical Grassmannian} $\Gr_p(d,n)$ is a rational polyhedral fan parametrizing \emph{realizable} $(d-1)$-dimensional tropical linear spaces in the tropical projective space $\mathbb{TP}^{n-1}$. These are contractible polyhedral complexes arising from the tropicalization of $(d-1)$-dimensional linear spaces in the projective space $\mathbb{P}^{n-1}_{\KK_p}$. The tropical Grassmannian is the tropical variety obtained from the tropicalization of the Pl\"ucker ideal $I_{d,n}$ generated by the algebraic relations among the $d\times d$-minors of a $d\times n$-matrix of indeterminates. It is the tropicalization of the Grassmannian Gr$(d,n)$. Its study has been initiated by Speyer and Sturmfels \cite{SpeyerSturmfels:2004}. In the paper the authors focus in particular on the tropical Grassmannian $\Gr_p(2,n)$, exhibiting a bijection with the space of phylogenetic trees with $n$ labeled leaves.

Herrmann, Jensen, Joswig and Sturmfels \cite{HerrmannJensenJoswigSturmfels:2009} studied the {\sl Dressian}  Dr$(d,n)$, an outer approximation of the tropical Grassmannian which parametrizes all $(d-1)$-dimensional tropical linear spaces in $\mathbb{TP}^{n-1}$. This is the tropical prevariety defined by the three  term Pl\"ucker relations among the generators of $I_{d,n}$. These relations induce the \emph{Pl\"ucker fan structure} on Dr$(d,n)$. From work of Speyer \cite{Speyer:2008} it follows that a point is in the Dressian if and only if it induces a matroid subdivision of the hypersimplex $\Delta(d,n)$. This endows Dr$(d,n)$ with a \emph{secondary fan structure} as subfan of the secondary fan of $\Delta(d,n)$. In  \cite{HerrmannJensenJoswigSturmfels:2009}, the authors proved that for $d=3$ the two fan structures coincide.

The Grassmannian Gr$(d,n)$ can be stratified in strata consisting of points with coordinates equal to zero if and only if they are not indexed by a basis of a matroid. As remarked in \cite{HerrmannJensenJoswigSturmfels:2009},  a similar stratification can be considered in the tropical setting. In particular, we can look at  the intersection of the Dressian Dr$(d,n)$ with each of the open faces of $\mathbb{TP}^{n-1}$. The intersection is not empty only if the face corresponds to a matroid of rank $d$ on $[n]$. This motivates the authors to give a similar definition for the  local Dressian Dr$(\cM)$ of a matroid $\cM$. In the article, they focused exclusively on this construction for the Pappus matroid. In our paper, we provide more examples and we analyze more deeply the properties of local Dressians. 

Local Dressians can also be endowed with two fan structures: one coming from the Pl\"ucker relations, one as a subfan of the secondary fan. Our main contribution is Theorem \ref{thm:structures} which states that the two fan structures coincide. The proof is based on a careful analysis of the subdivision induced by a point in the local Dressian on the $3$-dimensional skeleton of the matroid polytope. From our study it follows that a matroid subdivision is completely determined by its restriction to the $3$-skeleton. 

We then focus on local Dressians of disconnected matroids. We show that the local Dressian of the direct sum of two matroids is the product of their local Dressians.  Again, the key step in the proof is to look at the $3$-dimensional skeleton of the matroid polytope. 

Finally, we move our attention to indecomposable  matroids, i.e., matroids which do not admits matroid subdivisions of their matroid polytopes. The local Dressians of such matroids are linear spaces. We prove that binary matroids are indecomposable. Moreover, we give a counterexample for the converse, exhibiting a indecomposable non-binary matroid. 

Many questions related to the indecomposability of matroids arose during the work for this manuscript. We conclude with a short section collecting them, including a conjecture.

\section*{Acknowledgments}
\noindent
We are very grateful to Takayuki Hibi and Akiyoshi Tsuchiya for the kind hospitality and the organization  of  the \emph{Summer Workshop on Lattice Polytopes 2018}. Furthermore, we are indebted to Alex Fink for suggesting us to look at the matroid of Preposition \ref{prop:indecomp}. We also thank Michael Joswig and Felipe Rinc\'on for their useful comments. 
Research by the first and second author is supported by the Einstein Foundation Berlin. Research by third author is carried out in the framework of Matheon (Project \emph{MI6 - Geometry of Equilibria for Shortest Path}) supported by Einstein Foundation Berlin. The authors would also like to thank the Institute Mittag-Leffler for its hospitality during the
program \emph{Tropical Geometry, Amoebas and Polytopes}, where the collaboration  started. 

\section*{Notation} Before beginning, we fix some notation. Given $d \leq n$ non-negative integers, we define the sets $[n]:= \{1, 2, \dots, n\}$, and $\binom{[n]}{d}:= \{\textrm{subset $S$ of $[n]$ with $d$ elements}\}$. Moreover, given $S \subset [n]$ and $i,j \in [n]$, we use the notation $Sij:= S\cup \{i,j\}$.
Furthermore, we denote the all ones vector by $\bf 1$.

\section{Tropical Grassmannians and tropical linear spaces}
\noindent
We begin with some basics about tropical geometry following Maclagan and Sturmfels \cite{MaclaganSturmfels:2015}, focusing in particular on the definition of tropical Grassmannian. We work over the tropical semiring $\big(\mathbb{T} = \mathbb{R} \cup \{\infty\}, \oplus, \odot\big)$, where  arithmetic is  defined by $a \oplus b = \min \{a,b\}$ and $a \odot b = a+b$. 

Let $\KK_p$ be an algebraically closed field of carachteristic $p\ge 0$  with a non-trivial non-archimedean  valuation $\val: \KK_p \to \RR \cup \{\infty\}$. Examples are the field of Puiseaux series and their generalizations with real exponents; see Markwig \cite{Markwig:2010}. Given a polynomial $f \in \KK_p[x_1, \dots, x_n]$, 
  \[f(x_1, \dots, x_n) = \sum_{u = (u_1, \dots, u_n)} c_u x_1^{u_1} \cdot \cdots \cdot x_n^{u_n}, 
 \]
 its {\sl tropicalization $\text{trop}(f)$} is 
 \[
 \text{trop}(f)(x_1, \dots, x_n) = \bigoplus_{u =(u_1, \dots, u_n)} \val(c_u) \odot x_1^{\odot u_1} \odot \cdots \odot x_n^{\odot u_n}.
 \]
 The {\sl tropical hypersurface} trop$(V(f))$ is defined as the set of points $w \in  \RR^n$ such that the minimum in trop$(f)(w)$ is attained at least twice. Given an ideal $I \subseteq \KK_p[x_1, \dots, x_n]$, the {\sl tropical variety} trop$(V(I))$ is the intersection of the tropical hypersurfaces trop$(V(f))$, with $f \in I$.  A {\sl tropical prevariety} is the intersection of finitely many tropical hypersurfaces. Any tropical variety is a tropical prevariety as it is the intersection of the hypersurfaces of a tropical basis; see Hept and Theobald \cite{HeptTheobald:2009} and  \cite[Section 2.6]{MaclaganSturmfels:2015} for more details.
 
Any  vector $w \in \RR^n$ defines a partial term order on the polynomial ring $\KK_p[x_1, \ldots, x_n]$. Given an homogeneous ideal $I$ the set of initial ideals $\text{in}_w(I)$ endows $\RR^n$ with the structure of polyhedral complex called {\sl Gr\"obner  complex}. The following result, known as {\sl Fundamental Theorem of Tropical Algebraic Geometry}, gives the connection between algebraic and tropical varieties, and subcomplexes of the Gr\"obner complex. 
\begin{theorem}[{\cite[Theorem 3.1.3]{MaclaganSturmfels:2015}}] Let $I$ be an ideal in $\KK_p[x_1, \dots, x_n]$ and $V(I)$ its variety intersected with the torus $(\KK_p\setminus \{0\})^n$. The following sets coincide in $\RR^n$: 
\begin{enumerate}
\item the tropical variety trop$(V(I))$;
\item the closure in $\RR^n$ of set of vectors $w$ such that $\text{in}_w(I)$ does not contain a monomial. 
\item the closure in $\RR^n$ of the set $\{(\val(w_1), \dots, \val(w_n)) \, | \, w \in V(I)\}$. 
\end{enumerate} 
\end{theorem}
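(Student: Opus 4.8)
The plan is to prove the theorem as a cycle of inclusions among the three sets, separating out a single hard ``lifting'' step. Write $A=\{w\in\RR^n : \text{in}_w(I)\text{ contains no monomial}\}$ and $B=\{(\val(y_1),\dots,\val(y_n)) : y\in V(I)\}$, so that (1) is $\trop(V(I))$, (2) is $\closure(A)$ and (3) is $\closure(B)$. The easy direction is that both $A$ and $B$ are contained in $\trop(V(I))$. Indeed, if $y\in V(I)$ and $w_i=\val(y_i)$, then for every $f=\sum_u c_ux^u\in I$ we have $\infty=\val\big(f(y)\big)\ge\min_u\big(\val(c_u)+\langle u,w\rangle\big)$, with equality if the minimum is attained only once; hence it is attained at least twice, i.e.\ $w\in\trop(V(f))$, and intersecting over $f\in I$ gives $B\subseteq\trop(V(I))$. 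Likewise, if $w\in A$ then no $\text{in}_w(f)$, being an element of $\text{in}_w(I)$, is a monomial, so again $w\in\trop(V(f))$ for all $f$ and $A\subseteq\trop(V(I))$. Since $\trop(V(I))$ is closed, this already gives (2)\,$\subseteq$\,(1) and (3)\,$\subseteq$\,(1).

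For the inclusion (1)\,$\subseteq$\,(2) I would invoke the standard fact from Gr\"obner theory over a valued field that $\text{in}_w(I)$ contains a monomial if and only if $\text{in}_w(f)$ is a monomial for some $f\in I$ --- the nontrivial direction being obtained by refining $w$ to a term order, choosing a Gr\"obner basis, and cleaning up the element whose initial term is the offending monomial. Granting this, $w\notin A$ forces some $\text{in}_w(f)$ to be a monomial and hence $w\notin\trop(V(f))$, and therefore $w\notin\trop(V(I))$ since $\trop(V(I))\subseteq\trop(V(f))$; combined with the previous paragraph this yields $\trop(V(I))=A$. Finally, since $\text{in}_w(f)$ being a monomial is an open condition on $w$, the complement of $A$ is a union of open sets, so $A$ is closed and $\closure(A)=A=\trop(V(I))$, settling (1)\,$=$\,(2).

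The heart of the proof is the lifting inclusion (1)\,$\subseteq$\,(3): every $w\in\trop(V(I))$ must be approximated by coordinatewise valuations of points of $V(I)$. I would first reduce to the case that $w$ lies in the value group $\val(\KK_p^{\times})$ --- legitimate because this group is divisible and dense in $\RR$, since $\KK_p$ is algebraically closed with a nontrivial valuation, so it suffices to realize a dense subset of $\trop(V(I))$ --- and then replace $I$ by a minimal prime $P$ over $I$ with $\text{in}_w(P)$ still monomial-free; such a $P$ exists because initial forms are multiplicative, so monomials in all the $\text{in}_w(P_i)$ would multiply to a monomial in $\text{in}_w(I)$. For $P$ prime I would induct on $\dim V(P)$: by a Noether-normalization-type change of coordinates adapted to $w$, realize $V(P)$ as a finite cover of an affine space of dimension $\dim V(P)$, apply the inductive hypothesis downstairs (the base cases being a point and a hypersurface, where roots with prescribed valuations are produced directly by Newton-polygon arguments using that $\KK_p$ is algebraically closed and nontrivially valued), and lift the resulting point back along the finite map, controlling the valuations of its coordinates through the unique extension of the valuation to the finite field extension. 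Taking the closure then absorbs the remaining irrational weights.

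I expect the lifting step to be by far the main obstacle: producing an actual $\KK_p$-point of $V(I)$ with prescribed coordinate valuations, and verifying that monomial-freeness of $\text{in}_w(I)$ is precisely the condition that removes every obstruction in the induction, requires care and genuinely uses that $\KK_p$ is algebraically closed with a nontrivial valuation. By contrast, the inclusions of the first two paragraphs are essentially bookkeeping with initial forms and Gr\"obner bases.
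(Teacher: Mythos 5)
The paper does not actually prove this statement: it is quoted from Maclagan--Sturmfels (the citation is to Theorem 3.1.3, which in that book is Kapranov's hypersurface theorem; the version for arbitrary ideals is their Theorem 3.2.3), so your proposal has to be measured against the standard textbook proof. Your first two paragraphs are fine and follow that proof: the inclusions $(2),(3)\subseteq(1)$ are the easy valuation/initial-form bookkeeping, and $(1)=(2)$ via refining $w$ to a term order and extracting from a Gr\"obner basis an $f\in I$ with $\mathrm{in}_w(f)$ a monomial is exactly the standard argument, as is the observation that the set in $(2)$ is already closed. The reduction of the lifting step to $w$ in the value group and to a minimal prime $P$ with $\mathrm{in}_w(P)$ monomial-free is also standard (your multiplicativity argument works since a product of elements with monomial initial forms lies in a power of the radical). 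One caveat: density of value-group points in $\trop(V(I))$ does not follow from density of $\Gamma$ in $\RR$ alone; you need that $\trop(V(I))$ is a $\Gamma$-rational polyhedral complex (via the Gr\"obner complex) before divisibility of $\Gamma$ gives you a dense set of liftable weights.

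The genuine gap is in the heart of the matter, the inclusion $(1)\subseteq(3)$ for $P$ prime. As described, the induction does not function: after a Noether-normalization-type projection the ``downstairs'' object is a full affine space (torus), so applying the inductive hypothesis there is vacuous, and the unique extension of the valuation to the finite extension generated by a fiber point only says that the remaining coordinates have well-defined valuations --- it gives no control whatsoever ensuring these valuations equal the prescribed entries of $w$. The actual content of the theorem is precisely that one can choose the base point, among the many points with the prescribed valuation downstairs, \emph{generically} so that the finite fiber above it contains a point whose remaining coordinate has the prescribed valuation; in Maclagan--Sturmfels this is done one coordinate at a time (induction on the ambient dimension, not on $\dim V(P)$): after a monomial change of coordinates one eliminates a single variable, checks that the truncated weight still has monomial-free initial ideal for the elimination ideal, lifts by induction, and then uses a Newton-polygon/genericity argument (exploiting monomial-freeness of $\mathrm{in}_w(I)$ and the infinite residue field) to ensure the univariate polynomial cutting out the fiber has a root of the right valuation. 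That argument is exactly what your phrase ``controlling the valuations through the unique extension of the valuation'' elides, so the key inclusion is not established by the proposal as written.
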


When working with homogeneous polynomials it makes sense to consider tropical hypersurfaces and varieties in the tropical torus $\RR^n/\RR {\bf 1} \cong \RR^{n-1}$ or in its compactification $\mathbb{TP}^{n-1} = (\mathbb{T}^n  \setminus \{(\infty, \dots, \infty)\}) / \mathbb{R} {\bf 1}$. We will adopt both interpretations in this paper, making sure  to specify which one we are considering. 

\medskip 

Now, let $R$ be the polynomial ring in $\binom{n}{d}$ variables \[\ZZ\big[p_{i_1 \, i_2 \, \dots i_d} \, | \, i_1 < i_2 < \cdots < i_n \big].\]
We consider the {\sl Pl\"ucker ideal} $I_{d,n}$ in $R\otimes \KK_p$ generated by the algebraic relations among the $d \times d$-minors of any  $d \times n$-matrix in any field. The Grassmannian $\text{Gr}(d,n)$ is the variety $V(I_{d,n})$.  The ideal $I_{d,n}$ is generated by quadrics. The {\sl tropical Grassmannian $\Gr_p(d,n)$} is the tropical variety $\text{trop}(V(I_{d,n}))$. It is a pure $d(n-d)$-dimensional rational polyhedral fan in $\RR^{\tbinom{n}{d}-1} \cong \RR^{\tbinom{n}{d}}/\RR\bf{1}$. 

The study of tropical Grassmannian was initiated by Speyer and Sturmfels \cite{SpeyerSturmfels:2004}. The authors focused on $\Gr_p(2,n)$ and the special case $\Gr_p(3,6)$. The fan structure and the homology of the  tropical Grassmannian $\Gr_p(3,7)$ is studied in \cite{HerrmannJensenJoswigSturmfels:2009}.

\begin{theorem}[{\cite[Theorem 3.4 and Corollary 4.5]{SpeyerSturmfels:2004}}] The tropical Grassmannian $\Gr_p(2,n)$ is characteristic-free and coincides with the space of phylogenetic trees with $n$ labeled leaves. 
\end{theorem}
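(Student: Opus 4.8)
The plan is to prove the two assertions — characteristic-independence and the identification with the space of phylogenetic trees — simultaneously by computing the tropical Grassmannian $\Gr_p(2,n)$ explicitly from the ideal $I_{2,n}$. First I would recall that the Plücker ideal $I_{2,n}$ is generated by the \emph{three-term Plücker relations}
\[
p_{ij}\,p_{kl} - p_{ik}\,p_{jl} + p_{il}\,p_{jk} = 0, \qquad i<j<k<l,
\]
and a classical fact is that these quadrics already form a Gröbner basis (indeed a \emph{tropical basis}) for $I_{2,n}$; this is where a short S-pair / Buchberger-style check is needed, and it is the one genuinely computational ingredient, though it is standard. Granting this, $\Gr_p(2,n)$ is cut out set-theoretically inside $\RR^{\binom n2}/\RR\mathbf 1$ by the tropicalized relations: a vector $w=(w_{ij})$ lies in $\Gr_p(2,n)$ exactly when, for every $4$-subset $\{i,j,k,l\}$, the minimum of
\[
w_{ij}+w_{kl},\quad w_{ik}+w_{jl},\quad w_{il}+w_{jk}
\]
is attained at least twice. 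Because the generators have no nonzero coefficients coming from the valuation (all coefficients of the Plücker relations are $\pm 1$, valuation $0$), the defining conditions are visibly independent of $p=\operatorname{char}\KK_p$; this yields the characteristic-free claim.

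Next I would identify this tropical prevariety with the space of trees. Given a tree $T$ on leaf set $[n]$ with nonnegative edge lengths, set $w_{ij}$ to be the distance from leaf $i$ to leaf $j$ in $T$; the classical \emph{four-point condition} says precisely that for every quadruple the two largest of the three pairwise sums $w_{ij}+w_{kl}$, $w_{ik}+w_{jl}$, $w_{il}+w_{jk}$ are equal — which is exactly the tropical Plücker relation written with $\max$, equivalently with $\min$ after negating. Thus tree metrics (allowing arbitrary, possibly negative, real edge lengths on internal edges, and modding out by $\RR\mathbf 1$, i.e.\ by adding a constant to all pendant edges) satisfy all the tropical relations. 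Conversely, I would invoke the Tree–Metric Theorem (Buneman / the four-point characterization): any dissimilarity map satisfying the four-point condition on all quadruples is realized by a unique tree with real edge lengths. Translating between the $\min$-convention of the tropical semiring and the $\max$-convention of the four-point condition is just a global sign change on $\RR^{\binom n2}$, so $\Gr_p(2,n)$ is linearly isomorphic to the space of phylogenetic trees with $n$ labeled leaves.

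Finally, to get the stated fan (and dimension) structure, I would stratify by tree topology: the trees with a fixed combinatorial type $T$ form a relatively open polyhedral cone whose dimension equals the number of edges of $T$, and a trivalent tree has $2n-3$ edges, so after quotienting by $\RR\mathbf 1$ one obtains cones of dimension $2n-4 = 2(n-2) = d(n-d)$ with $d=2$, matching the expected dimension of the tropical Grassmannian. The cones fit together exactly as in the space of phylogenetic trees, because contracting an internal edge of $T$ (sending its length to $0$) corresponds to passing to a face of the cone, and this is precisely the face relation in the tree space. The main obstacle is the tropical-basis claim for the three-term relations: one must argue that these quadrics suffice to cut out the \emph{variety} tropically and not merely the prevariety. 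For $d=2$ this can be done by hand — e.g.\ by showing directly that any $w$ satisfying all four-point conditions lifts to a genuine point of $\Gr(2,n)$ over the Puiseux series, using the tree to build an explicit $2\times n$ matrix whose maximal minors have the prescribed valuations — which simultaneously reproves the Fundamental Theorem in this case and closes the argument.
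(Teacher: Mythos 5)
The paper does not prove this statement at all: it is quoted verbatim from Speyer--Sturmfels \cite{SpeyerSturmfels:2004} (their Theorem 3.4 and Corollary 4.5), so there is no internal proof to compare against. Your outline essentially reconstructs the original argument: identify the prevariety cut out by the three-term relations with tree space via the four-point condition/tree-metric theorem, then show that every such point lifts to an actual point of $\text{Gr}(2,n)$ over a valued field, which simultaneously proves that the three-term relations form a tropical basis of $I_{2,n}$ and, since the lift can be carried out over Puiseux series with residue field of any characteristic, gives characteristic-independence. That is the same route as the cited proof, and it is sound.

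Two cautions on details. First, ``Gr\"obner basis'' does not imply ``tropical basis,'' and the characteristic-free claim does not follow merely from the coefficients being $\pm 1$: a priori the tropical \emph{variety} could still depend on $p$ (as it does for $\Gr_p(3,7)$). Your argument is only complete once the lifting step is done in every characteristic; the standard construction (split $w$ into a pendant part lying in the lineality space, which is the image of $\RR^n$ and not just $\RR\mathbf{1}$, plus an ultrametric part, then choose scalars $u_1,\dots,u_n$ with prescribed valuations of differences to build the $2\times n$ matrix) works over any algebraically closed valued field, so this is fixable exactly as you indicate, but it is the crux and should not be waved at as ``classical.'' Second, the parenthetical ``arbitrary, possibly negative, real edge lengths on internal edges'' is wrong as stated: the cones of tree space are parametrized by \emph{nonnegative} internal edge lengths (arbitrary pendant lengths account for the lineality space), and with a negative internal length the four-point condition in the form you invoke fails; this is what makes the strata cones rather than linear spaces and is consistent with your own dimension count $2n-4$. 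Neither issue changes the overall plan.
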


Classically, the Grassmannian Gr$(d,n)$ is an example of a moduli space. It parametrizes the $d$-dimensional subspaces of a $n$-dimensional $\KK_p$-vector space. A $d$-dimensional linear subspace of $\KK_p^n$ can be represented by a full rank $d\times n$ matrix. The $d\times d$-minors form the Pl\"ucker vector which is a point on the Grassmannian. This surjective map from linear spaces to Pl\"ucker vectors is called the Stiefel map. 

Now, let $v$ be a vector in the tropical Grassmanian $\Gr_p(d,n)$. For each subset $S\in \binom{[n]}{d+1}$, consider the tropical linear polynomial
\begin{equation}\label{eq:linearspace}
	f_S(v) := \bigoplus\limits_{i\in S}v_{S\setminus i}\odot x_i
\end{equation}
and define $L_v$ as the intersection of the tropical hyperplanes defined by $f_S$, as $S$ varies over all elements in $\binom{[n]}{d +1}$. 
Then, $L_v$ is the tropicalization of a classical $d$-dimensional linear space. 

Speyer in \cite[Proposition 4.5.1]{Speyer:2008} showed that every tropicalization of a linear space arises this way. Indeed, any classical linear space has Pl\"ucker coordinates. Taking the valuation of each Pl\"ucker coordinate yields a vector $v\in \Gr_p(d,n)$ such that $L_v$ coincides with the tropicalization of the linear space. We take the above as definition of \emph{realizable} tropical linear space. The tropical Grassmannian is the moduli space of realizable tropical  linear spaces.  % Moreover, Speyer pointed out that tropical linear spaces are a purely combinatorial generalization of tropicalizations of linear spaces. We will get back to this in the next section. 

\begin{theorem}[{\cite[Theorem 3.8]{SpeyerSturmfels:2004}}]
	The bijection between the classical Grassmannian $\text{Gr}(d,n)$ and the set of $d$-planes in $\KK^n$ induces a unique bijection $v \mapsto L_v$ between the tropical Grassmannian and the set of realizable tropical $d$-planes in $n$-space. 
\end{theorem}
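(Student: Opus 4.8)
The plan is to verify four things: that $v\mapsto L_v$ is well defined on $\Gr_p(d,n)$, that its image is exactly the set of realizable tropical $d$-planes, that it is injective, and that it is the unique map for which the square formed by the two tropicalization maps and the classical bijection $\text{Gr}(d,n)\cong\{d\text{-planes in }\KK_p^n\}$ commutes. Well-definedness and uniqueness I would handle at once: formula~\eqref{eq:linearspace} shows that $L_v$ depends only on the point $v$, so the assignment descends to $\Gr_p(d,n)$; and since part~(3) of the Fundamental Theorem makes the valuation map $\text{Gr}(d,n)\to\Gr_p(d,n)$ surjective, any map induced by that commuting square is forced to be $v\mapsto L_v$, which in particular is such a map. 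For the image: given a realizable tropical $d$-plane $L=\trop(V)$ for a classical $d$-plane $V$, its Plücker vector has valuation $v\in\Gr_p(d,n)$ with $L_v=\trop(V)=L$ by \cite[Proposition~4.5.1]{Speyer:2008} (recalled above); so the map is onto. Conversely, writing $v=(\val\tilde v_S)_S$ for the Plücker vector $\tilde v$ of some $V$, the same fact gives $L_v=\trop(V)$, a realizable tropical $d$-plane.

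The real content is injectivity, so suppose $L_v=L_{v'}=:L$ and aim to prove $v=v'$ as points of $\mathbb{TP}^{\binom nd-1}$. First I would note that the matroid $M$ underlying $v$ is an invariant of $L$: it is the matroid of the classical linear space with tropicalization $L$, and it is visible in $L$ itself, for instance as the matroid whose Bergman fan is the recession fan of $L$. Hence $v$ and $v'$ share the matroid $M$ and both take the value $\infty$ off the bases of $M$, so it suffices to recover from $L$ the differences $v_B-v_{B'}$ for all pairs of bases $B,B'$ of $M$. The key device is the contraction of tropical linear spaces: for an independent set $S$ of size $d-1$, the contraction $L/S$ on ground set $[n]\setminus S$ equals the rank-$1$ tropical linear space $L_{v/S}$, that is, the single point $[\,v_{Sk}\,]_{k\in[n]\setminus S}\in\mathbb{TP}$. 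Since the contraction may be performed one element at a time by an operation intrinsic to $L$ as a set, this point---and therefore every difference $v_{Si}-v_{Sj}$ with $Si$ and $Sj$ bases---is determined by $L$.

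To pass from these local differences to all of them, I would use the basis-exchange graph of $M$, which is connected: any two bases $B,B'$ are joined by a path whose consecutive bases differ by a single exchange $Si\rightsquigarrow Sj$ with $S=B_\ell\cap B_{\ell+1}$ independent of size $d-1$, so $v_B-v_{B'}$ is a telescoping sum of terms already shown to be determined by $L$. Consequently $v'-v$ is a constant multiple of $\mathbf 1$, which is precisely the statement $v=v'$ in $\mathbb{TP}^{\binom nd-1}$, and injectivity follows.

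I expect the only genuine obstacle to lie in this injectivity step, and within it in the two inputs from the dictionary between tropical linear spaces and valuated matroids: that the matroid $M$ can be read off $L$, and that contraction of a tropical linear space is intrinsic to $L$ as a set and computes the valuated-matroid minor $v/S$. Both are standard, and granting them the basis-exchange patching and the remaining bookkeeping are routine. If one preferred to avoid the minor recursion, an alternative would be to reconstruct $v$ directly from the cocircuit vectors of $L$, but recovering the rank-$1$ minors seems the cleanest route.
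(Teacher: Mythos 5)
This statement is quoted background: the paper itself gives no proof, citing \cite[Theorem 3.8]{SpeyerSturmfels:2004}, so the only meaningful comparison is with the original argument there. Your outline is essentially correct and is in fact close in spirit to Speyer--Sturmfels: the heart of their injectivity proof is likewise to recover the Pl\"ucker data from the set $L_v$ via the distinguished ``points at infinity'' indexed by independent $(d-1)$-sets $S$ (the point whose $k$-th coordinate is $v_{S\cup k}$ for $k\notin S$, with the $S$-coordinates pushed to $+\infty$), which is exactly your rank-one contraction $v/S$, and then to propagate along single exchanges using connectivity of the basis-exchange graph. So the decomposition well-definedness/surjectivity/injectivity/uniqueness and the telescoping step are the standard route, not a genuinely different one.

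Two caveats you should make explicit if you write this up. First, the two ``standard inputs'' you lean on carry the real weight of injectivity and need a precise statement and citation or proof: that the recession fan of $L_v$ is the Bergman fan of the underlying matroid (hence $M$ is an invariant of $L$ as a set), and that contraction is intrinsic to $L$, with the correct description $L_{v/i}=\smallSetOf{x}{(x,t)\in L_v \text{ for all sufficiently large } t}$ under the min convention (the $-\infty$ direction gives deletion-type behaviour, not contraction; in the setting of the theorem the matroid is uniform, so the first input is automatic but the second is not). Second, for uniqueness you invoke ``part (3) of the Fundamental Theorem makes the valuation map surjective'', but as quoted it only gives that $\Gr_p(d,n)$ is the \emph{closure} of the image of the valuation; to force the induced map you need genuine surjectivity, which holds because $\KK_p$ can be taken with value group all of $\RR$ (generalized Puiseux series), or else a density-plus-continuity argument. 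With those points filled in, the proof is complete and matches the cited one in approach.
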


Let $A\in \RR^{d\times n}$ be a tropical matrix. The \emph{tropical Stiefel map} $\pi : \TT^{d\times n}\dashrightarrow \Gr(d,n)$ sends the matrix $A$ to the vector of its tropical minors $\pi(A)$. More precisely, for any $B = \{b_1,\dots,b_d\}\in \binom{[n]}{d}$ we have
\[
\pi(A)_B = \bigoplus\limits_{\sigma\in \cS_d} \bigodot \limits_{i=1}^nA_{b_\sigma(i),i}
\]
The tropical linear spaces which lie  in the image of the tropical Stiefel map are called Stiefel tropical linear spaces. These linear spaces have been studied by Ric\'on \cite{Rincon:2013}, Herrmann et al.\cite{HerrmannJoswigSpeyer:2012}, and Fink and Rinc\'on \cite{FinkRincon:2015}. Notice that not all tropical linear spaces arise in this way, i.e., the tropical Stiefel map is not surjective.

\section{Matroids and Dressians} 
\noindent
In this section we introduce the main characters of this paper: matroids and Dressians. The former are classical objects in discrete mathematics. They are an abstraction of the concept of linear independence. Nakasawa and Whitney introduced them independently in the 1930s. There are many cryptomorphic ways to define a matroid. We will present just one definition  and focus  on their relation to polyhedral structures. We refer the interested reader to Oxley \cite{Oxley:2011} and White \cite{White:1986}. 
A {\sl matroid} $\mathcal{M}$ is a pair $(E, \mathcal{B})$ where $E$ is a finite set and $\mathcal{B}$ is a non-empty collection of subsets of $E$ satisfying the {\sl base exchange property}: whenever $B$ and $B'$ are in $\mathcal{B}$ and and $e \in B \setminus B'$, there exists and element $f \in B'\setminus B$ such that $(B \setminus \{e\}) \cup \{f\}$ is also in $\mathcal{B}$. The sets in $\mathcal{B}$ are called {\sl bases}.  Each basis $B \in \mathcal{B}$ has the same number $d$ of elements, called the {\sl rank of $\cM$}. Given a subset $A\subseteq E$, the \emph{rank} $r(A)$ of $A$ is $\max\limits_{B\in \cB}|B\cap A|$. A \emph{flat} $F$ of $\cM$ is a subset of $E$ such that for every $e\in E\setminus F$ we have $r(F\cup e) = r(F)+1$.

\begin{example} Given a matrix $A$ with entries in a field $\KK$, the pair $(E, \mathcal{B})$ consisting of the set $E$ of columns of $A$ and the collection of the maximally independent subsets of $E$ is a matroid $\cM[A]$.
\end{example}

\begin{example}\label{ex:graphical} Given a (finite) graph, the pair $(E, \mathcal{B})$ consisting of the set of edges $E$ and the collection of the maximally spanning subsets of $E$ is a matroid.
\end{example}

One of the fundamental questions regarding matroid is about their representabilty. A matroid is {\sl representable} over a field $\KK$ if it is isomorphic to a matroid $\cM[A]$ for a matrix $A$ with entries in $\KK$. A matroid representable over the finite field with two elements is called {\sl binary}. A {\sl ternary matroid} is one representable over the finite field with three elements.  
A matroid that can be obtained from a graph as described in Example~\ref{ex:graphical} is called \emph{graphical matroid}. Graphical matroids are \emph{regular}, i.e., representable over any field.
It is a recent result of Nelson \cite{Nelson:2018} that almost no matroid is representable.
The following example provides non-regular matroids.

\begin{example} Let $E=[n]$ and $\mathcal{B}$ the collection of subsets of $E$ with $d$ elements. The matroid $(E, \mathcal{B})$ is the  {\sl  uniform matroid $\mathcal{U}_{d,n}$}.  The uniform matroid $U_{2,n}$ is not representable over a field with less than $n-1$ elements. In particular, the matroid $U_{2,4}$ is not binary.
\end{example}

\begin{example} The \emph{Fano matroid} $\cF_7$ is an example of a binary matroid that is only representable over fields of characteristic two. It is represented by all seven non vanishing $0/1$-vectors of length three over a field of characteristic $2$.  
\end{example}

The following operations on matroids are derived from taking minors of matrices.
Let $M = (E, \cB)$ be a matroid and $e\in E$. The \emph{deletion} of $e$
from $\cM$, denoted $\cM\backslash e$, is the matroid $(E\setminus
\{e\},\{B\in\cB \mid e\notin B\})$.
The \emph{contraction} of $e$ from $\cM$, denoted $\cM/e$, is the matroid
$(E\setminus \{e\},\{B\setminus e \mid e\in B \in \cB\})$.
Any matroid that is the result of successive deletions and contractions of
$\cM$ is called a \emph{minor} of $\cM$.
The \emph{dual} of $\cM$, denoted $\cM^*$, is the matroid $(E, \{E\setminus
B \mid B\in \cB\})$. It is straightforward to verify that $(\cM^*)^* =
\cM$, and that $\cM^* \backslash e = (\cM/e)^*$.

\vspace{\baselineskip} 

We are most interested in the polyhedral point of view of defining and studying matroids. We fix $E = [n]$ as ground set. Let $e_1, e_2, \dots, e_n$ be the canonical basis of $\RR^n$. For a collection $\mathcal{S}$ of subsets of $E=[n]$, we define the polytope 
\[P_{\mathcal{S}} := \conv\SetOf {e_S}{S \in \mathcal{S}}, \]
where $e_S := \sum_{i \in S} e_i$. 
The {\sl $d$-th hypersimplex} in $\RR^n$ is the polytope 
\[\Delta(d,n) := P_{\binom{[n]}{d}}.\] 
A subset $\cM \subset \binom{[n]}{d}$ is a {\sl matroid of rank $d$ on $n$ elements} if the edges of $P_\cM$ are parallel to the edges of $\Delta(d,n)$, i.e., they are of the form $e_i -e_j$ for $i, j \in [n]$ distinct. The elements in $\cM$ are the {\sl bases} and $P_\cM$ is a {\sl matroid polytope}. The fact that this construction gives a matroid is a result of Edmonds \cite{Edmonds:1970}. See also Gelfand, Goresky, MacPherson and Serganova \cite{GGMS:1987}. 

In terms of the matroid polytope, we have that
\[ P_{\cM\backslash i} \cong P_{f(\cM)} \cap \{x_i = 0\}, \ \text{and} \ P_{\cM/i} \cong P_{\cM} \cap \{x_i = 1\}. \]
Moreover, $P_{\cM^*} = f(P_{\cM})$ where $f$ is the affine involution  that  sends $x_i$ to $1-x_i$ for each coordinate $i$. In  particular, the polytopes $P_{\cM}$ and $P_{\cM^*}$ are isomorphic.

\medskip

We now move to the definition of Dressians.  We will particularly highlight their connection with matroids and matroid polytopes. Among the quadric generators of the Pl\"uker ideal $I_{d,n}$ are the {\sl three term Pl\"ucker relations}
\begin{equation} \label{eq:pluckerrelations} p_{Sij} \, p_{Skl} - p_{Sik} \, p_{Sjl} + p_{Sil} \, p_{Sjk}, 
\end{equation}
where $S \in \binom{[n]}{d-2}$ and $i,j,k,l \in [n]\setminus S$ pairwise distinct. The {\sl Dressian} $\Dr(d,n)$ is the tropical prevariety  in $\RR^{\tbinom{n}{d}}/\RR{\bf 1} \cong \RR^{\tbinom{n}{d} -1}$  defined by the Pl\"ucker relations. This means that for a vector $w$ in the Dressian $\Dr(d,n)$  the minimum of 
\begin{equation} \label{eq:pluckervect}
w_{Sij} + w_{S lm}\;, \quad w_{Sil} + w_{Sjm}\;, \quad  w_{Sim} + w_{Sjl}
\end{equation}
is achieved at least twice, where $S \in \binom{[n]}{d-2}$ and $i,j,l,m \in [n] \setminus S$ pairwise distinct. The name Dressian was proposed by Herrmann et al. \cite{HerrmannJensenJoswigSturmfels:2009} in honor of Andreas Dress who discovered these relations by looking at valuated matroids. We call a point in the Dressian \emph{valuated matroid}. The three term Pl\"ucker relations endow Dr$(d,n)$ with the {\sl Pl\"ucker fan structure}. 
The Dressian can be also viewed as a subcomplex in the tropical projective space $\mathbb{TP}^{\tbinom{n}{d}-1}$.  We will do this in the next section, when we introduce local Dressians. 

It follows directly from the definition that the Dressian $\Dr(d,n)$ contains the tropical Grassmannian $\Gr_p(d,n)$ for any characteristic $p$. From the results in Maclagan--Sturmfels \cite{MaclaganSturmfels:2015}, it follows that $\Dr(2,n)$ = $\Gr_p(2,n)$ as fans and $\Dr(3,6) = \Gr_p(3,6)$ only as sets. The tropical Grassmannian $\Gr_p(3,7)$ depends on the characteristic $p$ of the field and $\Gr_p(3,7) \neq \Gr_2(3,7)$ for $p\neq 2$ due to the representability properties of the Fano matroid. This implies that the Dressian $\Dr(d,n)$ disagrees with the tropical Grassmannian $\Gr_p(d,n)$ for $d\geq 3$ and $n\geq 7$.
This fact is even reflected in their dimensions. The dimension of $\Dr(d,n)$ is of order $n^{d-1}$ for fixed $d$, while the dimension of $\Gr_p(d,n)$ grows linear in $n$, see \cite[Corollary~32]{JoswigSchroeter:2017}. 

As we said, the Dressian $\Dr(d,n)$ is the intersection of $\tbinom{n}{d+2}\tbinom{d+2}{4}$ tropical hypersurfaces coming from the three term Pl\"ucker relations. Note that these relations do not generate the Pl\"ucker Ideal $I_{d,n}$ for $n\geq d+3 \geq 6$, but they generate its image in the Laurent polynomials ring $\KK_p[p_{i_1  \dots i_d}^{\ \pm}  \, | \, i_1 < i_2 < \cdots < i_n]$, see \cite[Section 2]{HerrmannJensenJoswigSturmfels:2009}. The tropical variety defined by the ideal generated by the three term Pl\"ucker relations coincides with the tropical Grassmannian.  

The following proposition provides an upper bound for the number of elements in a tropical basis for the tropical Grassmannian $\Gr_p(d,n)$, i.e., the number of tropical hypersurfaces defining $\Gr_p(d,n)$. 

\begin{proposition}
	The tropical Grassmannian has a tropical basis of size:
	\[
		\binom{n}{d+1}\left( \binom{n}{d-1} -\binom{d+1}{2}  \right) + \binom{n}{d}-d(n-d) \leq  2^{2n+1} \enspace .
	\]
\end{proposition}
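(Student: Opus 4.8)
The plan is to produce the desired tropical basis as the union of two explicit pieces, a generating set of the Pl\"ucker ideal $I_{d,n}$ together with a controlled number of auxiliary polynomials, and then to bound its cardinality by a crude binomial estimate.

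\emph{First} I would use that $I_{d,n}$ is generated by quadrics, namely the quadratic Pl\"ucker relations, and take all of the nontrivial ones as the first part of the basis. These relations are indexed by pairs $(I,J)$ with $I\in\binom{[n]}{d-1}$ and $J\in\binom{[n]}{d+1}$, and the relation attached to $(I,J)$ is identically zero precisely when $I\subseteq J$, which happens for $\binom{n}{d+1}\binom{d+1}{2}$ of the pairs. Hence the number of nontrivial quadratic Pl\"ucker relations is $\binom{n}{d+1}\bigl(\binom{n}{d-1}-\binom{d+1}{2}\bigr)$, which accounts for the first summand. It is essential here to work with the full collection of quadratic relations and not only with the three term relations: the latter fail to generate $I_{d,n}$ for $n\ge d+3\ge 6$, whereas a tropical basis must in particular be a generating set of the ideal.

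\emph{Second} I would invoke the general tropical-basis bound obtained from regular projections (Hept--Theobald \cite{HeptTheobald:2009}; see also \cite[Section~2.6]{MaclaganSturmfels:2015}). The Pl\"ucker ideal is prime, and $\Gr_p(d,n)$ is a pure fan of dimension $d(n-d)$ inside $\RR^{\binom{n}{d}}/\RR\mathbf{1}$, hence of codimension $\binom{n}{d}-1-d(n-d)$ there. The bound then yields that any generating set of a prime ideal can be completed to a tropical basis by adjoining at most $(\mathrm{codim}+1)$ further elements of the ideal, contributing exactly $\binom{n}{d}-d(n-d)$ polynomials, the second summand. Adding the two pieces gives the stated cardinality.

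\emph{Finally} the inequality follows from a crude estimate: $\binom{n}{d+1}\binom{n}{d-1}\le 2^n\cdot 2^n=2^{2n}$, and $\binom{n}{d}-d(n-d)\le\binom{n}{d}\le 2^n\le 2^{2n}$, so the total is at most $2^{2n}+2^{2n}=2^{2n+1}$. The only genuine obstacle I foresee is the second step: one has to pin down the precise form of the Hept--Theobald bound and check that it applies verbatim to the homogeneous ideal $I_{d,n}$ in the torus $\RR^{\binom{n}{d}}/\RR\mathbf{1}$, so that exactly "codimension plus one" extra hypersurfaces are needed; the remaining steps are bookkeeping and the routine estimate above.
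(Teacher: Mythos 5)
Your proposal is correct and follows essentially the same route as the paper: it counts the nontrivial quadratic Pl\"ucker relations generating the prime ideal $I_{d,n}$ (the paper cites the proof of Theorem 14.6 in Miller--Sturmfels for this, while you derive the count $\binom{n}{d+1}\bigl(\binom{n}{d-1}-\binom{d+1}{2}\bigr)$ directly from the $(I,J)$-indexing) and then applies Theorem 1 of Hept--Theobald to a prime ideal of codimension $\binom{n}{d}-d(n-d)-1$ to adjoin the remaining $\binom{n}{d}-d(n-d)$ polynomials, exactly as in the paper's proof. The concluding estimate $\le 2^{2n+1}$ is the same routine bookkeeping the paper leaves implicit.
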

\begin{proof}
	This bound follows from Theorem 1 in  Hept--Theobald \cite{HeptTheobald:2009} as $I_{d,n}$ is a prime ideal of codimension $\tbinom{n}{d}-d(n-d)-1$ generated by $\tbinom{n}{d+1}\left( \tbinom{n}{d-1} -\tbinom{d+1}{2}  \right) $ polynomials. The generators can be read off from the prove of Theorem 14.6 in  Miller--Sturmfels \cite{MillerSturmfels:2005}.
\end{proof}
Note that this is a better estimation of the minimal size of a tropical basis than the one that can be derived from a general degree bound given in \cite[Example 9]{JoswigSchroeter:2018}.

\vspace{\baselineskip}

Our final  goal for this section is to explain the  relation of  the Dressian to a general concept in polyhedral geometry. Let $P$ be a polytope in $\RR^n$ with $m$ vertices and dimension $k$. Any vector $w \in \RR^m$ induces a regular subdivision of $P$. We think $w$ as a height function which lifts the vertex $v_i$ to the height $w_i$. By projecting the lower faces of the convex hull $\conv\smallSetOf{(v_i, w_i)}{v_i \, \textrm{vertex of} \ P} \subset \RR^{m+1}$ we get a subdivision of $P$. Vectors inducing the same subdivision form a relatively open cone. The collection of all these cones is the secondary fan of the polytope $P$. The lineality space is the largest linear space contained in each cone of the fan. The secondary fan has a $(k+1)$-dimensional lineality space that contains ${\bf 1}\in\RR^m$. In particular we may consider its image in $\RR^m / \RR{\bf 1}$. 

A subdivision of $\Delta(d,n)$ is a {\sl matroid subdivision} if each of its cell is a matroid polytope. Speyer proved a description of the Dressian in terms of matroid subdivisions. 
 \begin{theorem}[Proposition 2.2 in Speyer \cite{Speyer:2008}] \label{proposition:speyer} A vector $w \in \RR^{\binom{n}{d}}$ lies in the Dressian $\Dr(d,n)$ if and only if it induces a matroid subdivision of the hypersimplex $\Delta(d,n)$.  
 \end{theorem}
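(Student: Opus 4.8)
The plan is to prove the two implications separately; the reverse one carries all the difficulty. Two standing facts will be used repeatedly: the restriction of a regular subdivision to a face of the subdivided polytope is the regular subdivision of that face induced by the restricted height vector, and a face of a matroid polytope is again a matroid polytope (its vertices are $0/1$-vectors of coordinate sum $d$ and its edges lie among the edges of the ambient matroid polytope, so the edge-direction characterization of matroid polytopes recalled above applies).

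For the direction ``matroid subdivision $\Rightarrow w\in\Dr(d,n)$'', I would fix $S\in\binom{[n]}{d-2}$ and pairwise distinct $i,j,k,l\in[n]\setminus S$ and pass to the face $F$ of $\Delta(d,n)$ obtained by setting $x_m=1$ for $m\in S$ and $x_m=0$ for $m\notin S\cup\{i,j,k,l\}$; this $F$ is an octahedron $\Delta(2,4)$ with vertices $e_{Sij},\dots,e_{Skl}$. By the first standing fact $w|_F$ induces a matroid subdivision of $F$, and a direct inspection shows that $\Delta(2,4)$ has exactly four matroid subdivisions (the trivial one and the three coarsest splits into pairs of square pyramids), in each of which the minimum of the three sums $w_{Sij}+w_{Skl}$, $w_{Sik}+w_{Sjl}$, $w_{Sil}+w_{Sjk}$ is attained at least twice. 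Hence $w$ satisfies the three term relation attached to $(S;i,j,k,l)$, and since $S,i,j,k,l$ were arbitrary, $w\in\Dr(d,n)$.

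For the converse I would argue that every cell $C$ of the induced subdivision $\Sigma_w$ is a matroid polytope, which by the edge-direction characterization amounts to showing that whenever $e_B,e_{B'}$ span an edge of $C$ one has $|B\triangle B'|=2$. Suppose instead that $[e_B,e_{B'}]$ is an edge of $C$ with $r:=|B\setminus B'|\ge 2$, and pick $(\psi,\gamma)$ supporting $C$ from below, so $w_{B''}\ge\langle\psi,e_{B''}\rangle+\gamma$ with equality exactly on the vertices of $C$. The crucial input is the \emph{symmetric exchange property} of $w$: for every $a\in B\setminus B'$ there is $c\in B'\setminus B$ with $w_{B-a+c}+w_{B'-c+a}\le w_B+w_{B'}$. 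Granting it, put $B_1=B-a+c$ and $B_2=B'-c+a$; then $e_{B_1}+e_{B_2}=e_B+e_{B'}$, so a short computation with the supporting functional gives $w_{B_1}+w_{B_2}\ge\langle\psi,e_{B_1}+e_{B_2}\rangle+2\gamma=w_B+w_{B'}$, whence equality holds termwise and $e_{B_1},e_{B_2}\in C$. Since $c\notin B$ and $|B_1\triangle B'|=2(r-1)\ge 2$, the segment $[e_{B_1},e_{B_2}]\subseteq C$ is distinct from $[e_B,e_{B'}]$ yet shares its midpoint; hence the minimal face of $C$ containing that midpoint in its relative interior has dimension at least two, contradicting that $[e_B,e_{B'}]$ is an edge. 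This forces $|B\triangle B'|=2$ for every edge, so $C$ is a matroid polytope and $\Sigma_w$ is a matroid subdivision.

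It remains to establish the symmetric exchange property from the three term relations, and this is where I expect the main obstacle to lie. The case $r=2$ is immediate: with $S=B\cap B'$, $B\setminus B'=\{a,a'\}$ and $B'\setminus B=\{c_1,c_2\}$, the three term relation for $(S;a,a',c_1,c_2)$ says that the minimum of $w_B+w_{B'}$, $w_{B-a+c_1}+w_{B'-c_1+a}$ and $w_{B-a+c_2}+w_{B'-c_2+a}$ is attained at least twice, so $w_B+w_{B'}$ is not a strict unique minimum and one may take $c\in\{c_1,c_2\}$. For $r\ge 3$ the inequality follows by induction on $r$ by chaining three term relations: this is precisely the classical fact that a real tropical Pl\"ucker vector is a valuated matroid of $U_{d,n}$, i.e.\ that the ``local'' exchange axiom (the three term relations) implies the ``global'' one. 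Carrying out this induction --- bookkeeping which auxiliary bases the intermediate three term relations introduce so that the estimates telescope --- is the delicate part, and I would either work it out carefully or invoke Speyer \cite{Speyer:2008}.
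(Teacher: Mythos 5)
Your overall architecture is sound, and large parts of it are correct: the easy direction via restriction to the octahedral faces $\Delta(2,4)$ is exactly the standard argument, and your reduction of the hard direction to the symmetric exchange inequality (using the supporting affine functional of a cell, the identity $e_{B_1}+e_{B_2}=e_B+e_{B'}$, and the midpoint/face argument to rule out edges $[e_B,e_{B'}]$ with $|B\triangle B'|\ge 4$) is a correct and clean reduction. Note, for what it is worth, that the paper itself gives no proof of this statement --- it is imported verbatim from Speyer --- so the only question is whether your argument stands on its own.

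It does not yet, and the gap is precisely where you flag it. For $r=|B\setminus B'|\ge 3$ no single three term relation involves both $w_B$ and $w_{B'}$ (a relation ties together six bases of the form $S\cup\{\text{two of }i,j,k,l\}$ with $|S|=d-2$, which forces the two ``outer'' bases to be at distance $2$), so the claimed ``chaining'' has to manufacture intermediate bases, and a naive induction on $r$ does not telescope: the two terms produced by a three term relation applied to an intermediate pair need not be comparable to $w_B+w_{B'}$ without further input. The statement you need --- that the three term relations on a real vector indexed by $\binom{[n]}{d}$ imply the full symmetric exchange inequality --- is exactly the local-to-global exchange theorem for valuated matroids (Dress--Wenzel), whose known proofs require a carefully chosen induction quantity and a nontrivial case analysis; it is the substantive content of the theorem, not bookkeeping. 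Your proposed fallback of ``invoking Speyer'' is circular here, since the statement being proved is Speyer's Proposition 2.2 (his own argument likewise has to do real work at this point, after restricting to the minimal face $\Delta(r,2r)$ containing the offending edge). So either cite Dress--Wenzel's theorem explicitly --- in which case your write-up becomes a correct reduction to a known result --- or supply the induction in full; as written, the crux of the ``only if $\Rightarrow$ matroidal'' direction is missing.
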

This description sees the Dressian $\Dr(d,n)$ as a subfan of the secondary fan of the hypersimplex $\Delta(d,n)$, and define the {\sl secondary fan structure} on $\Dr(d,n)$. Suppose that $d\geq 2$. For each $S \in \binom{[n]}{d}$ of cardinality $d-2$, and $i,j,l,m \in [n] \setminus S$, the points
\[
e_{Sij}, \, e_{Sil}, \,  e_{Sim}, \, e_{Sjl}, \,  e_{Sjm} \, \textrm{and} \,  e_{Slm} 
\]
define the vertices of a octahedron $O$, which is a $3$-dimensional face of the hypersimplex $\Delta(d,n)$.  A  point $w$ in  the Dressian $\Dr(d,n)$ induces a matroid subdivision of $\Delta(d,n)$. According to which of the three  inequalities and equations in (\ref{eq:pluckervect}) are satisfied, the subdivision induced by $w$ on $O$, determines  one of the three possible subdivision of the octahedron in two quadrilateral pyramids or the trivial subdivision. Herrmann et al. \cite{HerrmannJensenJoswigSturmfels:2009}  showed that for $d = 3$ the Pl\"ucker fan structure coincides with the secondary fan structure. In the next section we will prove that this holds in general. 

For any valuated matroid $v\in \Dr(d,n)$, we can define $L_v$ in the same way as we did for points in the Grassmanian in Section 2.
 We call such $L_v$ a tropical linear space. Note that, as there are valuated matroids that are not in the tropical Grassmanian, there are tropical linear spaces which are not realizable.
If $v\in\Gr_p(d,n)$ then all the faces of the subdivision induced by $v$ must be  polytopes of matroids representable in characteristic~$p$; see \cite[Example 4.5.4]{Speyer:2009} and \cite[Proposition 34]{JoswigSchroeter:2017}.

Given a valuated matroid $v\in \Dr(d,n)$, any point $x\in \RR^n$ defines a matroid $\cM_x$ by taking the face from the regular subdivision of $\Delta(d,n)$ that is minimized in the direction of $x$. In other words, the bases of $\cM_x$ are the sets $B\in\binom{[n]}{d}$ such that $v_B-\sum\limits_{i\in B}x_i$ is minimal. A loop in a matroid is an element which is contained in no bases.
The notation of a linear space in \eqref{eq:linearspace} generalizes to an arbitary Pl\"ucker vector $v\in\Dr(d,n)$  of its realizability. The following is a combinatorial description of such a tropical linear space.

\begin{proposition}[{\cite[Proposition 2.3]{Speyer:2008}}] The tropical linear space $L_v$ consists of exactly all points $x\in \RR^n$ such that $\cM_x$ has no loops.
\end{proposition}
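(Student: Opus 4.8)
The plan is to rewrite the defining condition of $L_v$ in terms of the numbers $v_B$ and to first remove the point $x$ from the problem. For $x\in\RR^n$ set $v^x_B:=v_B-\sum_{i\in B}x_i$ for $B\in\binom{[n]}{d}$. In each three term Pl\"ucker relation, parametrised by $T\in\binom{[n]}{d-2}$ and pairwise distinct $i,j,l,m\in[n]\setminus T$, the three vectors $e_{Tij}+e_{Tlm}$, $e_{Til}+e_{Tjm}$ and $e_{Tim}+e_{Tjl}$ all equal $2e_T+e_i+e_j+e_l+e_m$; hence passing from $v$ to $v^x$ shifts the three entries in \eqref{eq:pluckervect} by one common constant, so $v^x\in\Dr(d,n)$. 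Using $\sum_{k\in S\setminus i}x_k=\bigl(\sum_{k\in S}x_k\bigr)-x_i$ one checks $f_S(v^x)(y)=f_S(v)(x+y)-\sum_{k\in S}x_k$ for all $y$, whence $L_{v^x}=L_v-x$; moreover the bases of the matroid attached to $v^x$ at the origin are exactly the $d$-sets minimising $v^x_B=v_B-\sum_{i\in B}x_i$, i.e. this matroid is $\cM_x$. So it suffices to prove the statement for $x=0$: that $0\in L_v$ if and only if the matroid $\cM_0$ whose bases are the $d$-sets minimising $v$ has no loops. Note that $0\in L_v$ means precisely that $\min_{i\in S}v_{S\setminus i}$ is attained at least twice for every $S\in\binom{[n]}{d+1}$.

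Suppose first that $\cM_0$ has a loop $\ell$, and fix a basis $B^{*}$ of $\cM_0$, so that $\ell\notin B^{*}$. Put $S:=B^{*}\cup\{\ell\}\in\binom{[n]}{d+1}$. Among the values $v_{S\setminus i}$ with $i\in S$, the one for $i=\ell$ is $v_{B^{*}}=\min_B v_B$, while for $i\in B^{*}$ the set $S\setminus i=(B^{*}\setminus i)\cup\{\ell\}$ contains the loop $\ell$, hence is either not a basis or a non-minimising basis; in either case $v_{S\setminus i}>v_{B^{*}}$. Therefore the minimum in $f_S(v)(0)$ is attained only at $i=\ell$, so $0\notin L_v$.

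Conversely, suppose $0\notin L_v$, so there exist $S\in\binom{[n]}{d+1}$ and $i_0\in S$ with $v_{S\setminus i_0}<v_{S\setminus i}$ for every $i\in S\setminus i_0$; in particular $\mu:=v_{S\setminus i_0}<\infty$. Write $B_0:=S\setminus i_0$ and $\nu:=\min_B v_B\le\mu$, and assume for contradiction that $\cM_0$ has no loops, so that some basis $C$ of $\cM_0$ contains $i_0$. Then $v_C=\nu$ and $i_0\in C\setminus B_0$, so the symmetric exchange property of valuated matroids yields $f\in B_0\setminus C$ with
\[
v_C+v_{B_0}\ \ge\ v_{(C\setminus i_0)\cup f}+v_{(B_0\setminus f)\cup i_0}.
\]
Since $f\ne i_0$ we have $(B_0\setminus f)\cup i_0=S\setminus f$, hence $v_{(B_0\setminus f)\cup i_0}=v_{S\setminus f}>\mu$, while $v_{(C\setminus i_0)\cup f}\ge\nu$; so the right-hand side strictly exceeds $\nu+\mu=v_C+v_{B_0}$, a contradiction. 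Therefore $i_0$ is a loop of $\cM_0$, and $\cM_0$ is not loopless. Together with the previous paragraph and the reduction to $x=0$, this proves the proposition.

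The single ingredient that is not routine bookkeeping is the symmetric exchange inequality used in the last step: for bases $B,B'$ of the underlying matroid with finite value and any $e\in B\setminus B'$, there is $f\in B'\setminus B$ with $v_B+v_{B'}\ge v_{(B\setminus e)\cup f}+v_{(B'\setminus f)\cup e}$. I expect this to be the main point to pin down; it is classical for valuated matroids, the genuine content being that it follows from the three term relations \eqref{eq:pluckervect}, and granting it the remainder of the argument is elementary.
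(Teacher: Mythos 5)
The paper itself gives no proof of this proposition; it is imported verbatim from Speyer's paper (Proposition 2.3 there), so there is no in-paper argument to compare yours against. Your proof reads as a correct self-contained derivation: the reduction via $v^x_B=v_B-\sum_{i\in B}x_i$ is sound (all three terms of each relation \eqref{eq:pluckervect} shift by one common constant, and $f_S(v^x)(y)=f_S(v)(x+y)-\sum_{k\in S}x_k$ gives $L_{v^x}=L_v-x$), the loop direction is immediate as you say, and the converse, applying the exchange inequality to a minimizing basis $C\ni i_0$ and $B_0=S\setminus i_0$ and using the strict uniqueness of the minimizer to get $v_{S\setminus f}>\mu$, does yield the desired contradiction. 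The one point you rightly flag is also the one that must be cited carefully: the inequality $v_B+v_{B'}\ge v_{(B\setminus e)\cup f}+v_{(B'\setminus f)\cup e}$ is the Dress--Wenzel exchange axiom for valuated matroids, whereas this paper defines the Dressian by the three term Pl\"ucker relations only; that the three term conditions imply the full exchange axiom is a genuine theorem (Dress--Wenzel; in the language of the remark following the definition of $\Dr(\cM)$, it is essentially the ``weak equals strong'' statement attributed to Baker--Bowler), so it should enter as an explicit citation rather than as routine bookkeeping. Granting that reference, your argument is complete, and it has the merit of deriving the statement directly from the definitions of $\cM_x$ and $L_v$ instead of quoting Speyer wholesale; the cost is precisely the reliance on that nontrivial local-to-global exchange result.
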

For each loopless matroid $\cM$ whose polytope appears in the regular subdivision induced by $v$, there is a corresponding polyhedral cell in $L_v$ given by the closure of all the points $x$ such that $\cM_x = \cM$. 
This way the tropical linear space $L_v$ has the structure of a polyhedral complex. A cell is bounded in this complex if and only if $\cM$ has no coloops. The subcomplex of bounded cells is the tight span. Note that the polyhedral structure of $L_v$ is not unique. We will illustrate this by looking at the recession fan of $L_v$. 

Whenever $v\in \Dr(d,n)$ in the tropical projective space has only $0$ and $\infty$ as values, the tropical linear space $L_v$ coincides with the Bergman fan of the matroid $\cM$ whose bases are the coordinates where $v$ is $0$.
This is also the recession fan of $L_v$. The Bergman fan has a natural fan structure by the above arguments. We can equip the Bergman fan with a finer structure in the following way: for each flat $F$ of $\cM$, i.e., closed set, let $e_F = \sum_{i\in F}e_i$ be a ray in the Bergman fan.
And for every flag $F_1\subset\dots\subset F_k$ of flats we get a cone generated by $e_{F_1},\dots,e_{F_k}$.
When $v$ is any valuated matroid and $x\in L_v$, then $L_v$ is locally near $x$ the same as the Bergman fan of the matroid $M_x$. For further details see \cite{FeichtnerSturmfels:2005}, \cite{ArdilaKlivans:2006} and \cite[Chapter 4]{MaclaganSturmfels:2015}. Moreover, the introduction of Hampe \cite{Hampe:2015}  gives a broad overview about properties and developments of tropical linear spaces.

\section{Local Dressians} 
\noindent
Let $\cM = ([n], \mathcal{B})$ be a matroid on the set $[n]$ of rank $d$.  Let $b = |\mathcal{B}|$ be the number of bases of $\cM$. We consider the variety in $\mathbb{P}^{b-1}$ defined by the ideal $I_{\cM}$ obtained  from the Pl\"ucker ideal $I_{d,n}$ by setting all the variables  $p_B$ to zero, where $B$ is not a basis of $\cM$.  The variety $V(I_{\cM})$ is the realization space of the matroid $\cM$. It parametrizes all the $d$-dimensional linear subspaces of $\KK_p^n$ whose non-zero Pl\"ucker coordinates are the bases of $\cM$. In particular $V(I_{\cM}) = \emptyset$ if and only if the matroid is not representable over $\KK_p$. This gives a stratification of the Grassmannian Gr$(d,n)$, where the strata are defined as 
\[ \{ p \in \text{Gr}(d,n) \, | \, p_B = 0 \, \text{if and only if} \, B \not \in \mathcal{B}\}. \]
A variation of Mn\"ev's Universality theorem implies that the strata can be complicated as any algebraic variety.

\begin{remark} For the reader familiar with toric geometric, consider the  Grassmannian Gr$(d,n)$ over the complex numbers $\CC$. The algebraic torus $T = (\CC^*)^n$ acts on $\CC^n$ by $(t_1, \dots, t_n) \cdot (x_1, \dots, x_n) = (t_1 x_1, \dots, t_n x_n)$. The action is linear so it maps subspaces to subspaces. Therefore, it induces an action on the Grassmannian Gr$(d,n)$. Given a point $p \in Gr(d,n)$, the closure of the  orbit  $T \cdot p$ is a toric variety. Let $p$ be a point in the stratum defined by $\cM$. The image of $\overline{T\cdot p}$ through the moment map  is the matroid polytope $P_{\cM}$. For further reading we refer to \cite{GGMS:1987}. 
\end{remark}

We now look at a similar  local construction for the Dressian, i.e., we look at the Dressian $\Dr(\cM)$ of a matroid $\cM = ([n], \mathcal{B})$. This construction has been introduced by Herrmann et al. \cite{HerrmannJensenJoswigSturmfels:2009}. In the article the authors focus just on a single example where $\cM$ is the Pappus matroid of rank three on nine elements.

 The \emph{Dressian $\Dr(\cM)$ of a matroid $\cM$} is the tropical prevariety in $\RR^{b-1} \cong \RR^{b}/\RR{\bf 1} $ given by the set of quadrics obtained from the three term Pl\"ucker relations by setting the variables $p_B$ to zero, where $B$ is not a basis of $\cM$. The Dressian $\Dr(d,n)$ contains the Dressians of all matroids of rank $d$ on $n$ elements as subcomplexes at infinity.

Let us be more precise. From the coordinatewise logarithmic map $-\log$ we get a homoemorphism $\text{int}(\Delta_{n-1}) \to \RR^n / {\bf 1}\RR$. The tropical projective space $\mathbb{TP}^{n-1}$ is a compactification of the tropical torus $\RR^n / {\bf 1}\RR$, such that the pair $(\text{int}(\Delta_{n-1}), \Delta_{n-1})$ is homeomorphic to $( \RR^n / {\bf 1}\RR, \mathbb{TP}^{n-1})$. 

Given $Z$ a non-empty subset of $[n]$, we define the set 
\[ \mathbb{T}^n(Z) := \{  (w_1, w_2, \dots, w_n)  \in \mathbb{T}^n \, | \, w_i = \infty \, \text{if and only if} \, i \in Z\}.\]
The image of the  sets $\mathbb{T}^n(Z)$ through the quotient map give a stratification of the boundary of  $\mathbb{TP}^{n-1}$. See Section 5 of Joswig \cite{Joswig:book} for further details.

 The intersection of the closure of the Dressian $\Dr(d,n)$ in the tropical projective space $\mathbb{T}^{\tbinom{n}{d}-1}$ and the boundary stratum $\smallSetOf{w\in\mathbb{T}^{\tbinom{n}{d}-1}}{ w_S = \infty \text{ for } S\not\in\cM }$ agrees with the local Dressian $\Dr(\cM)$. Therefore the Dressian $\Dr(d,n)$ contains the Dressians of the matroid $\cM$ as subcomplex at infinity. 

\begin{remark} Our definition of the local Dressian $\Dr(\mathcal{U}_{d,n})$ of the uniform matroid agrees with the definition of the Dressian $\Dr(d,n)$ and is bases only on the three term Pl\"ucker relations.
The definition of the local Dressian given in \cite[Section 6]{HerrmannJensenJoswigSturmfels:2009} and  \cite[Definition 4.4.1]{MaclaganSturmfels:2015} takes all quadratic Pl\"ucker relations into account.
	These definitions agree, see \cite[Example 2.32]{BakerBowler:2017}. In the paper, the authors call the elements in our definition of the Dressian \emph{weak} matroids and the elements coming from all  quadratic Pl\"ucker relations  \emph{strong} matroids over the tropical hyperfield. 
\end{remark}

\medskip

The following statement follows from the definition of Dr$(\cM)$ and Theorem \ref{proposition:speyer}. 
\begin{corollary} \label{corollary:localSpeyer} A vector $w$ lies in the Dressian $\Dr(\cM)$ if and only if it induces a matroid subdivision of the matroid polytope  $P_{\mathcal{M}}$. 
\end{corollary}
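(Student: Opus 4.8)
The plan is to reduce Corollary~\ref{corollary:localSpeyer} to Speyer's Theorem~\ref{proposition:speyer} by viewing the matroid polytope $P_{\cM}$ as a face of the hypersimplex $\Delta(d,n)$ and tracking how regular subdivisions and the relevant Pl\"ucker relations restrict to that face. First I would recall that $P_{\cM}$ is exactly the intersection of $\Delta(d,n)$ with the supporting hyperplanes $\{x_{S}=0\}$ for the non-bases $S\notin\cB$; equivalently, $P_{\cM}$ is the face of $\Delta(d,n)$ obtained by sending the heights $w_{S}$ for $S\notin\cM$ to $+\infty$. A point $w\in\RR^{b}$ (indexed by bases) extends to a height function $\widetilde w$ on all of $\binom{[n]}{d}$ by setting $\widetilde w_{S}=+\infty$ for $S\notin\cM$ (or, to stay in $\RR^{\binom{n}{d}}$, by setting those coordinates to a sufficiently large finite number $N\gg 0$): in either formulation the regular subdivision induced by $\widetilde w$ on $\Delta(d,n)$ restricts on the face $P_{\cM}$ to the regular subdivision induced by $w$, and conversely every regular subdivision of $P_{\cM}$ arises this way.

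Next I would unwind the two sides of the equivalence through this correspondence. On one hand, $w\in\Dr(\cM)$ means that for every $S\in\binom{[n]}{d-2}$ and every four-element subset $\{i,j,l,m\}$ of $[n]\setminus S$ the minimum among $w_{Sij}+w_{Slm}$, $w_{Sil}+w_{Sjm}$, $w_{Sim}+w_{Sjl}$ is attained at least twice, where a term involving a non-basis is read as $+\infty$. I claim this is equivalent to saying the extended height $\widetilde w$ satisfies all three term Pl\"ucker relations on $\Delta(d,n)$ in the sense of \eqref{eq:pluckervect}: for quadruples all of whose octahedron vertices $Sij,\dots,Slm$ are bases this is immediate, and for quadruples where some $e_{S\bullet}$ is a non-basis one checks directly that the "attained twice'' condition still holds because at most two of the six vertices of such an octahedron can fail to be bases (as $P_\cM$ has only hypersimplex edges, the octahedral $2$-faces of $\Delta(d,n)$ meet $P_\cM$ in a face, forcing the missing vertices to be compatible), so the minimum over the three pairs is either $+\infty$ (all relevant pairs involve a non-basis) or finite and attained by a pair of bases. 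Hence $w\in\Dr(\cM)$ iff $\widetilde w\in\Dr(d,n)$.

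Then I would invoke Theorem~\ref{proposition:speyer}: $\widetilde w\in\Dr(d,n)$ iff $\widetilde w$ induces a matroid subdivision of $\Delta(d,n)$, i.e.\ every maximal cell is a matroid polytope. Because the cells that meet the interior of the face $P_\cM$ are precisely the cells of the induced subdivision of $P_\cM$, and each such cell is a face of a cell of the subdivision of $\Delta(d,n)$ hence still a matroid polytope (a face of a matroid polytope is a matroid polytope), the subdivision of $\Delta(d,n)$ restricts to a matroid subdivision of $P_\cM$. Conversely, given a matroid subdivision of $P_\cM$, lifting it by $\widetilde w$ as above and filling in with the single "big'' cell over the non-basis vertices produces a matroid subdivision of $\Delta(d,n)$, since the additional maximal cell is $\conv(P_\cM \cup \{e_S : S\notin\cM\}) = \Delta(d,n)$ itself when $\cM=\cU_{d,n}$ and otherwise is cut out by hypersimplex-parallel edges by a short check; care is needed here that the combined subdivision is genuinely regular, which is why the large finite height $N$ formulation is convenient. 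Combining both directions with the equivalence $w\in\Dr(\cM)\iff \widetilde w\in\Dr(d,n)$ yields the statement.

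The main obstacle I anticipate is the bookkeeping at the boundary: making precise that the octahedral faces of $\Delta(d,n)$ interact well with the face $P_\cM$ (so that "at most two vertices of each relevant octahedron are non-bases'' really holds and the $+\infty$/finite case analysis of \eqref{eq:pluckervect} is exhaustive), and verifying that the extension/restriction of regular subdivisions across the face $P_\cM$ is a bijection compatible with the matroid-polytope condition. None of this is deep, but it is the part where one must be careful rather than wave hands; everything else is a direct translation through Speyer's theorem.
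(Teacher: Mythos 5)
Your high-level idea (reduce to Speyer's theorem by extending $w$ over the non-bases) is in the spirit of how the paper treats this statement -- the paper offers no separate argument and regards the corollary as a direct consequence of the definition of $\Dr(\cM)$ together with Speyer's Proposition~2.2, whose proof only examines edge directions of cells and so applies verbatim to Pl\"ucker vectors with restricted support -- but your concrete implementation contains real errors. First, $P_\cM$ is \emph{not} a face of $\Delta(d,n)$: the ambient coordinates are $x_1,\dots,x_n$, so there is no hyperplane $\{x_S=0\}$ for a $d$-subset $S$, and the faces of the hypersimplex are only the very special matroid polytopes obtained by setting some $x_i$ to $0$ or $1$ (products of smaller hypersimplices). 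For instance, the square pyramid of the rank-$2$ matroid on $\{1,2,3,4\}$ with $3\parallel 4$ is a subpolytope but not a face of $\Delta(2,4)$. Everything in your argument that invokes ``restriction of a regular subdivision to the face $P_\cM$'' or ``cells meeting the interior of the face'' therefore rests on a false premise. Second, the claim that at most two of the six vertices of an octahedral $3$-face can be non-bases is false: if $2,3,4$ are parallel in $\cM$, then among $e_{S12},e_{S13},e_{S14},e_{S23},e_{S24},e_{S34}$ only the first three are bases. Third, and most damaging, the asserted equivalence $w\in\Dr(\cM)\iff\widetilde w\in\Dr(d,n)$ fails in the finite-$N$ formulation you propose as the rigorous one. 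Take $\cM$ of rank $2$ on $\{1,2,3,4\}$ with bases $12,13,14$ and $w_{12}=0$, $w_{13}=1$, $w_{14}=2$. Then $w\in\Dr(\cM)$ (all three-term relations become identically zero after setting non-basis variables to zero, and $w$ induces the trivial subdivision of the triangle $P_\cM$), but for $\widetilde w_{23}=\widetilde w_{24}=\widetilde w_{34}=N$ the three sums in \eqref{eq:pluckervect} are $N$, $N+1$, $N+2$, so the minimum is attained only once and $\widetilde w\notin\Dr(2,4)$ for every finite $N$. The same example kills the converse construction: no matroid subdivision of $\Delta(2,4)$ restricts to the data you lift, so ``filling in with one big cell'' does not produce a matroid subdivision of the hypersimplex in general.

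The correct ways to argue are either to use Speyer's result in its original generality, where the Pl\"ucker vector is allowed to take the value $\infty$ (equivalently, has arbitrary matroid support) and the subdivided polytope is the convex hull of $e_B$ over the support --- with the convention that an all-infinite minimum is not a violation, the statement then really is a restatement of the definition of $\Dr(\cM)$, which is how the paper reads it --- or to rerun Speyer's argument directly on $P_\cM$: a regular subdivision of $P_\cM$ is matroidal if and only if every edge of every cell is of the form $e_i-e_j$, and this condition is detected on the at most $3$-dimensional faces of $P_\cM$, which are exactly the octahedra, pyramids, prisms, squares and simplices governed by the three-term relations indexed by bases of $\cM$. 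The large-$N$ trick cannot work as stated, because membership in $\Dr(d,n)$ imposes conditions on octahedra of $\Delta(d,n)$ that meet $P_\cM$ only in lower-dimensional faces, and those conditions are genuinely stronger than $w\in\Dr(\cM)$.
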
 

Therefore we have again two fan structures on the Dressian of a matroid $\cM$: one induced by the Pl\"ucker relations and one induced by the secondary fan.

\begin{theorem} \label{thm:structures}  Let $\cM$ be a matroid of rank $d$ on $n$ elements. The Pl\"ucker fan structure coincides with the secondary fan structure on $\Dr(\cM)$. 
\end{theorem}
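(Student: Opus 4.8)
\noindent\textit{Proof plan.}\quad Both fan structures are supported on the set $\Dr(\cM)$, and the secondary fan structure on $\Dr(\cM)$ is by construction a subfan of the secondary fan of $P_\cM$ (Corollary~\ref{corollary:localSpeyer}). Hence it suffices to show that the two structures refine one another. Writing $\Sigma_w$ for the matroid subdivision of $P_\cM$ induced by $w$, the two statements to establish are: (I) if $\Sigma_w=\Sigma_{w'}$ then $w,w'$ lie in a common cone of the Pl\"ucker fan structure; and (II) conversely.

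Direction (I) is the easy one, and I would dispose of it first. Among the three-term relations \eqref{eq:pluckervect}, those for which fewer than two of the monomials $p_{Sij}p_{Slm},p_{Sil}p_{Sjm},p_{Sim}p_{Sjl}$ survive the specialization to $I_\cM$ are vacuous, and those for which exactly two survive impose an equality valid on all of $\Dr(\cM)$; neither kind cuts the fan. For the remaining relations all six of $Sij,\dots,Slm$ are bases of $\cM$, so the octahedron $O=O(S;i,j,l,m)$, a face of $\Delta(d,n)$ cut out by a supporting hyperplane $H$, satisfies $P_\cM\cap H=O$ and is therefore a $3$-face of $P_\cM$. Then $\Sigma_w$ restricts on $O$ to the trivial subdivision or to one of its three splits into two square pyramids, and (just as in \cite{HerrmannJensenJoswigSturmfels:2009} for $d=3$) which of these occurs is precisely the record of which terms of \eqref{eq:pluckervect} attain the minimum. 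Thus equal subdivisions force the same Pl\"ucker data.

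Direction (II) is the substance, and I would prove it by induction on $\dim P_\cM$, carried out in parallel with the auxiliary claim
\[
(\ast)\qquad\text{a matroid subdivision of a matroid polytope of dimension }\ge 4\text{ trivial on the boundary is trivial.}
\]
If $\dim P_\cM\le 3$ then $P_\cM$ is a simplex, a triangular prism, a $3$-cube, a square, or an octahedron $O(S;i,j,l,m)$, and of these only the octahedra carry a non-trivial matroid subdivision, which by the computation in (I) is a function of a single instance of \eqref{eq:pluckervect}; so (II) holds and $(\ast)$ is empty. For $\dim P_\cM=N\ge 4$, let $w,w'$ be in a common Pl\"ucker cone. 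For every proper face $F=P_{\mathcal{N}}$ of $P_\cM$ the three-term relations of $\mathcal{N}$ are specializations of those of $\cM$, so $w|_F,w'|_F$ lie in a common Pl\"ucker cone of $\Dr(\mathcal{N})$; since $\dim F<N$, the inductive hypothesis gives $\Sigma_w|_F=\Sigma_{w'}|_F$, so $\Sigma_w$ and $\Sigma_{w'}$ agree on $\partial P_\cM$. Since Pl\"ucker cones are convex cones, $w+tw'$ remains in the cone for $t\ge 0$, hence every $\Sigma_{w+tw'}$ is a matroid subdivision, and for generic small $t>0$ the subdivision $\Gamma:=\Sigma_{w+tw'}$ is the common refinement of $\Sigma_w$ and $\Sigma_{w'}$ and agrees with both on $\partial P_\cM$. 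If $\Gamma\ne\Sigma_w$, pick a cell $Q$ of $\Sigma_w$ of least dimension on which $\Gamma$ is non-trivial. If $\dim Q\le 3$ then $Q$ is an octahedron $O(S;i,j,l,m)$ with all six vertices bases of $\cM$, hence a face of $P_\cM$; boundary agreement gives $\Sigma_{w'}|_Q=\Sigma_w|_Q$, which is trivial as $Q\in\Sigma_w$, so $\Gamma|_Q=\Sigma_w|_Q\wedge\Sigma_{w'}|_Q$ is trivial --- contradiction. If $\dim Q\ge 4$, minimality makes $\Gamma|_Q$ trivial on $\partial Q$, so $(\ast)$ applied to the matroid polytope $Q$ makes $\Gamma|_Q$ trivial --- contradiction. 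Hence $\Gamma=\Sigma_w=\Sigma_{w'}$, and this is also the content of the Corollary that a matroid subdivision is determined by its $3$-skeleton.

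The main obstacle is $(\ast)$, which genuinely requires the matroid structure: for arbitrary polytopes it fails (the $4$-dimensional cross-polytope has a coordinate hyperplane split that is trivial on the boundary). The plan for $(\ast)$: given a non-trivial matroid subdivision $\Sigma$ of a matroid polytope $P$ with $\dim P\ge 4$, choose a maximal cell $C\subsetneq P$; a short convexity argument produces a facet $W=C\cap C'$ of $C$ not contained in $\partial P$. As $W$ is a matroid polytope whose affine span is a hyperplane section of $\operatorname{aff}(P)$, that span has the coordinate form $\{\sum_{j\in A}x_j=k\}$ for some $A\subseteq[n]$ and some $k$ with $0<k<r(A)$, so both closed half-spaces bounded by $H=\operatorname{aff}(W)$ meet $P$ in matroid polytopes. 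One then has to push the interior wall $W$ out to $\partial P$ --- applying the inductive form of the statement to the matroid polytope $P\cap H$, and separately treating the case where $W$ already spans a full split hyperplane of $P$ (where one must show $H$ cuts some facet of $P$ through its interior), using Speyer's three-term criterion (Proposition~\ref{proposition:speyer}, Corollary~\ref{corollary:localSpeyer}) on the octahedra straddling $H$ --- to conclude that $\Sigma$ is non-trivial on a facet of $P$. Equivalently $(\ast)$ asserts that a height function affine on every facet of a $\ge 4$-dimensional matroid polytope is affine, which can instead be attacked by extending the affine function across shared ridges along the dual graph on facets, the rigidity coming from the facet combinatorics of matroid polytopes (edges of the form $e_i-e_j$, and sufficiently overlapping facets). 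Making either route precise --- in particular handling the degenerate facet overlaps and the split sub-case --- is the delicate part of the argument; everything else reduces to it through the induction above.
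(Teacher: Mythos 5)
Your direction (I) is fine, and the reduction scaffolding in (II) is sound as far as it goes: boundary agreement via induction over proper faces, convexity of the Plücker cone to get that $\Sigma_{w+tw'}$ is matroidal, and the observation that a minimal nontrivially subdivided cell of dimension at most $3$ would have to be an octahedron all six of whose vertices are bases, hence a face of $P_\cM$, where boundary agreement gives a contradiction. The genuine gap is that the whole substance of the theorem has been deposited into the auxiliary claim $(\ast)$, and $(\ast)$ is not proved. It is only a plan, and you yourself flag that making it precise (the degenerate facet overlaps, the split sub-case) is unresolved. Note that $(\ast)$ is essentially the paper's Corollary~\ref{cor:uniDressCone} in disguise: for $\dim P_\cM\ge 4$ the $3$-skeleton lies in the boundary, so a subdivision trivial on the boundary is trivial on the $3$-skeleton, and $(\ast)$ is exactly the assertion that this forces triviality. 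So your induction does not reduce the theorem to anything easier than its own core. Moreover the sketched route for $(\ast)$ contains steps that are not obviously true and are not justified: it is not automatic that the affine span $H$ of an interior wall of a matroid subdivision is a split hyperplane of $P$, i.e.\ that both closed halves $P\cap H^{\pm}$ are matroid polytopes, and the subsequent plan to push the wall out to $\partial P$ is precisely where the matroid combinatorics must do real work and is left open.

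For comparison, the paper closes exactly this gap by a short direct argument, with no induction on dimension and no common refinement: given $v,w$ in the same Plücker cone, the induced subdivisions agree on the $3$-skeleton; one then takes a maximal cell $\sigma_v$ of $\Sigma_v$, a path $q_1\dots q_k$ in its vertex-edge graph, and looks at the first index $i$ at which no cell of $\Sigma_w$ contains $q_1,\dots,q_{i+1}$. Since $q_{i-1}$ and $q_{i+1}$ are at distance at most two, the basis-exchange axiom identifies the face of $\sigma_v$ they span as an octahedron, a pyramid, a square or a triangle, and each case is contradictory (an octahedron would be subdivided in $\Sigma_w$ but not in $\Sigma_v$, contradicting agreement on the $3$-skeleton; the other faces admit no nontrivial matroid subdivision, forcing $q_{i+1}$ into $\sigma_w$). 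If you want to keep your outline, the cleanest repair is to prove $(\ast)$, or directly your $\dim Q\ge 4$ step, by such a walking argument inside the cell rather than by the wall-pushing plan; as submitted, the proof is incomplete at its decisive point.
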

\begin{proof} First, we take vectors $v$ and $w$ lying in the same cone of the secondary fan. They induce the same subdivision of the matroid polytope $P_\cM$,  in particular of the $3$-dimensional skeleton. Therefore $v$ and $w$ satisfy the same three term Pl\"ucker relations and lie in the same cone of the local Dressian equipped with the Pl\"ucker structure.

Now we focus on the viceversa. We take $v$ and $w$ lying in the same Pl\"ucker cone $C_v$. This means that they satisfy the same equations and inequalities coming from the  three term Pl\"ucker relations. By Corollary~\ref{corollary:localSpeyer}, they induce two matroid subdivisions $\Sigma_v$ and $\Sigma_w$ of $P_\cM$. We want to show that $\Sigma_v = \Sigma_w$. This will imply that $v,w$ are in the same secondary cone. By the fact that they satisfy the same Pl\"ucker relations, we know that $\Sigma_v|_{3-\text{skeleton}} =  \Sigma_w|_{3-\text{skeleton}}$ as the $3$-faces are either tetrahedra or octahedra. We pick $\sigma_v$ a maximal dimensional cell in $\Sigma_v$. We suppose that $\sigma_v$ is not in $\Sigma_w$. It means without loss of generality there are vertices $q_1$ and $q_k$ in the cell $\sigma_v$ such that $q_1$ and $q_k$ do not lie in a maximal dimensional cell of $\Sigma_w$. 
Let $q_1 \, q_2 \dots q_k$ be a path in the vertex-edge graph of the cell $\sigma_v$. We pick a cell $\sigma_w$ in $\Sigma_w$ that contains $q_1 \, \dots \, q_i$ for some $i\leq k$ and there is no cell in $\Sigma_w$ containing $q_1 \, \dots \, q_{i+1}$. 

	Now we have that $q_{i-1}$ and $q_{i+1}$ are at most of distance two. So we can use the base exchange axiom in the definition of a matroid to construct up to six points giving the unique face $F$ of $\sigma_v$ spanned by $q_{i-1}$ and $q_{i+1}$. The following situations may arise.
\begin{itemize}
	\item Either $F$ is a octahedron, then $F$ is subdivided in $\Sigma_w$ as $q_{i-1}$, $q_i$ are in $\sigma_w$ and $q_{i+1}$ is not. This is a contradiction to the fact that the subdivisions agree on the $3$-skeleton.

	\item If $F$ is a pyramid, it cannot be subdivided, therefore $F$ is a face of $\sigma_w$ and hence $q_{i+1}$ is a vertex of $\sigma_w$, and that contradicts our assumption.

	\item Similarly if $F$ is $2$-dimensional, i.e., a square or a triangle. 
\end{itemize}
Hence we conclude that both points $q_1$ and $q_k$ are in $\sigma_w$ and hence the subdivisions $\Sigma_v$ and $\Sigma_w$ agree.
\end{proof}

\begin{corollary}The Pl\"ucker fan structure on the Dressian $\Dr(d,n)$ as a fan in $\RR^{\binom{n}{d} -1}$ coincides with the secondary fan structure. 
\end{corollary}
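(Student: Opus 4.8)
The plan is to deduce this corollary as the special case of Theorem~\ref{thm:structures} in which $\cM$ is the uniform matroid $\cU_{d,n}$. First I would record that, by construction, $\Dr(d,n) = \Dr(\cU_{d,n})$, not merely as sets but with matching Pl\"ucker fan structures: every set $B\in\binom{[n]}{d}$ is a basis of $\cU_{d,n}$, so no Pl\"ucker variable is set to zero, and hence the three term Pl\"ucker relations defining $\Dr(d,n)$ are exactly the relations defining $\Dr(\cU_{d,n})$. The ambient space $\RR^{\binom{n}{d}-1}\cong\RR^{\binom{n}{d}}/\RR{\bf 1}$ is likewise the same in both descriptions.

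Next I would match the two secondary structures. Since $\Delta(d,n) = P_{\binom{[n]}{d}} = P_{\cU_{d,n}}$, the matroid polytope of $\cU_{d,n}$ is precisely the hypersimplex. Theorem~\ref{proposition:speyer} realizes $\Dr(d,n)$ as a subfan of the secondary fan of $\Delta(d,n)$, while Corollary~\ref{corollary:localSpeyer} realizes $\Dr(\cU_{d,n})$ as a subfan of the secondary fan of $P_{\cU_{d,n}}$; as these are the secondary fan of one and the same polytope, the two secondary fan structures agree.

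Having identified both fan structures on $\Dr(d,n)$ with the corresponding fan structures on $\Dr(\cU_{d,n})$, the statement is exactly the conclusion of Theorem~\ref{thm:structures} applied to $\cM = \cU_{d,n}$, so the corollary follows. I do not expect a genuine obstacle here: the entire content is the bookkeeping that the identifications of ambient spaces, of defining relations, and of secondary fans are literal equalities rather than mere isomorphisms, and each of these is immediate from the definitions of $\Dr(d,n)$ and $\Dr(\cM)$ given above.
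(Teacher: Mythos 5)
Your proposal is correct and is exactly the paper's argument: the corollary is obtained by applying Theorem~\ref{thm:structures} to the uniform matroid $\cU_{d,n}$, using that $\Dr(d,n)=\Dr(\cU_{d,n})$ and $\Delta(d,n)=P_{\cU_{d,n}}$. Your additional bookkeeping that the ambient spaces, defining relations, and secondary fans literally coincide is a harmless elaboration of the same one-line reduction.
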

\begin{proof} It is enough to consider the uniform matroid $\mathcal{U}_{d,n}$ in the previous statement. 
\end{proof}

\begin{corollary}\label{cor:uniDressCone} Let $d \geq 2$, and  $\Sigma$ and $\Sigma'$ be two matroid subdivisions of the hypersimplex $\Delta(d,n)$.
   If they induce the same subdivision on the $3$-skeleton, or equivalently on the octahedral faces of $\Delta(d,n)$, then $\Sigma$ and $\Sigma'$ coincide.
\end{corollary}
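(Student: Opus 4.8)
\noindent
The plan is to deduce this directly from Theorem~\ref{thm:structures} applied to the uniform matroid $\mathcal{U}_{d,n}$, whose local Dressian is $\Dr(\mathcal{U}_{d,n}) = \Dr(d,n)$ and whose matroid polytope is $\Delta(d,n)$. By Theorem~\ref{proposition:speyer} (or Corollary~\ref{corollary:localSpeyer}) I first pick weight vectors $v, v' \in \Dr(d,n)$ that induce $\Sigma$ and $\Sigma'$, respectively; it then suffices to show that $v$ and $v'$ lie in the same cone of the secondary fan of $\Delta(d,n)$.

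\noindent
Next I record why the octahedral faces control the whole $3$-skeleton. Every face of $\Delta(d,n)$ of dimension at most $3$ is either an octahedron $O$ as described before Theorem~\ref{thm:structures}, or a simplex (point, segment, triangle, tetrahedron), or a square $2$-face; a matroid subdivision is necessarily trivial on a simplex, and also on a square $2$-face, since neither of its diagonals is parallel to an edge of $\Delta(d,n)$ and hence cannot occur in a matroid subdivision. Thus a matroid subdivision of $\Delta(d,n)$ is determined on the $3$-skeleton by its restriction to the octahedral faces, which is the asserted equivalence; the hypothesis $d \geq 2$ is used exactly to guarantee that these octahedra are genuine faces. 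Now, for a weight $w \in \Dr(d,n)$ and an octahedron $O$ attached to $S \in \binom{[n]}{d-2}$ and pairwise distinct $i,j,l,m$, the minimum of the three expressions in (\ref{eq:pluckervect}) is attained at least twice, and, by the discussion preceding Theorem~\ref{thm:structures}, the subdivision of $O$ induced by $w$ is in bijection with the pattern of equalities and strict inequalities among those three expressions (all three equal gives the trivial subdivision, one strictly largest gives the corresponding split into two quadrilateral pyramids). Consequently, $\Sigma$ and $\Sigma'$ agree on every octahedral face, hence on the $3$-skeleton, if and only if $v$ and $v'$ satisfy exactly the same three-term Pl\"ucker relations, i.e., lie in the same cone of the Pl\"ucker fan structure on $\Dr(d,n)$.

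\noindent
Finally, I invoke Theorem~\ref{thm:structures} for $\mathcal{U}_{d,n}$: this common Pl\"ucker cone equals a cone of the secondary fan of $\Delta(d,n)$, so $v$ and $v'$ lie in one secondary cone and therefore induce the same regular subdivision, that is, $\Sigma = \Sigma'$. I do not expect a genuine obstacle here, since the corollary is essentially a restatement of Theorem~\ref{thm:structures}; the only point meriting a little care is the dictionary between the matroid subdivisions of an octahedral face and the equality/inequality patterns of a three-term Pl\"ucker relation, which is already set up and used in the proof of Theorem~\ref{thm:structures} and which I would cite rather than reprove.
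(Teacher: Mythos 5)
Your derivation is sound as far as it goes, and for subdivisions induced by points of the Dressian it is essentially the intended route: the dictionary between the possible matroid subdivisions of an octahedral face and the pattern of equalities among the three sums in (\ref{eq:pluckervect}) is exactly the discussion preceding Theorem~\ref{thm:structures}, so agreement on all octahedral faces is the same as lying in a common Pl\"ucker cone, and Theorem~\ref{thm:structures} applied to $\cU_{d,n}$ identifies that cone with a secondary cone, forcing $\Sigma=\Sigma'$. Your reduction of the $3$-skeleton to the octahedral faces is also fine, up to one cosmetic slip: every proper face of $\Delta(d,n)$ is again a hypersimplex, so its $2$-faces are triangles and there are no square $2$-faces (squares occur only in matroid polytopes of disconnected matroids); including that case is harmless but unnecessary.

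The genuine issue is your very first step, where you pick weight vectors $v,v'\in\Dr(d,n)$ inducing $\Sigma$ and $\Sigma'$. This tacitly assumes that $\Sigma$ and $\Sigma'$ are \emph{regular} subdivisions, whereas the corollary (and its use in the abstract and introduction, ``a matroid subdivision is determined by its $3$-skeleton'') carries no regularity hypothesis. Theorem~\ref{proposition:speyer} only says that regular matroid subdivisions are exactly those induced by Dressian points; it does not say that every matroid subdivision of $\Delta(d,n)$ is regular, and nothing in the paper supplies that. The paper's actual argument for this corollary is the second half of the proof of Theorem~\ref{thm:structures}: given two matroid subdivisions agreeing on the $3$-skeleton, one walks along a path $q_1,\dots,q_k$ in the vertex--edge graph of a maximal cell of one subdivision and uses the base exchange axiom to see that the face spanned by $q_{i-1}$ and $q_{i+1}$ is an octahedron, a pyramid, a square or a triangle; this uses only that all cells are matroid polytopes and never invokes a height function, so it covers non-regular subdivisions as well. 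To complete your proof you should either add the hypothesis that $\Sigma$ and $\Sigma'$ are regular, or bypass Theorem~\ref{thm:structures} and run that combinatorial path argument directly on $\Sigma$ and $\Sigma'$.
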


\begin{remark}
	The  above statement extends Proposition 4.3 and Theorem 4.4 by Herrmann et al. \cite{HerrmannJensenJoswigSturmfels:2009} and is the key in the algorithm in Section 6 of Herrmann et al. \cite{HerrmannJoswigSpeyer:2012} for computing (local) Dressians.
	Note that the abstract tree arrangements in Section 4 of Herrmann et al. \cite{HerrmannJensenJoswigSturmfels:2009} are a cover of the $3$-skeleton of the hypersimplex $\Delta(3,n)$ for $n\geq 6$ and the metric condition guarantees that the height functions agree on all three maps that contain a given vertex.  
\end{remark}

A \emph{connected component} $S$ of $\cM$ is a minimal non
empty subset with the property that $|S\cap B|$ is the same for every base
$B$ of $\cM$. Connected components of $\cM$ partition $[n]$. If $[n]$
is the only connected component, we say that $\cM$ is \emph{connected}.
We derive the following characterization of the lineality space which follows from the characterization of the dimension of a matroid polytope in terms of connected components by Edmonds \cite{Edmonds:1970} or Feichtner and Sturmfels \cite{FeichtnerSturmfels:2005}. Together with the fact that the secondary fan of a set of vertices has a lineality space of the same dimension as the  affine dimension of the set of vertices.

\begin{corollary}
	Let $b$ be the number of bases of a matroid $\cM$ on $n$ elements and with $c$ connected components. The lineality space of the Dressian $\Dr(\cM)$ in $\RR^b/\RR{\bf 1}$ is of dimension $dim P_\cM = n-c$.
\end{corollary}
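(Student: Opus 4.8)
The plan is to identify the lineality space of $\Dr(\cM)$ with the cone of the trivial subdivision of $P_\cM$ and then to compute its dimension. By Theorem~\ref{thm:structures} the Pl\"ucker and secondary fan structures on $\Dr(\cM)$ agree, so I may work with the secondary one; by Corollary~\ref{corollary:localSpeyer}, $\Dr(\cM)$ is then the subfan of the secondary fan of the vertex configuration $\{e_B : B\in\cB\}$ consisting of those cones whose associated regular subdivision of $P_\cM$ is a matroid subdivision.

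First I would observe that the trivial subdivision of $P_\cM$ is a matroid subdivision, since $P_\cM$ is itself a matroid polytope; hence the cone $C_0$ it spans --- which is exactly the lineality space of the ambient secondary fan --- is a cone of $\Dr(\cM)$. Since $C_0$ is a linear subspace and a face of every cone of the secondary fan, it is contained in every cone of $\Dr(\cM)$; conversely the lineality space of $\Dr(\cM)$ is a linear subspace contained in every cone of $\Dr(\cM)$, hence in $C_0$. So the two coincide, and it remains to compute $\dim C_0$. For this I would invoke the standard description of the lineality space of a secondary fan: $C_0$ is the image in $\RR^b/\RR{\bf 1}$ of the height functions $w$ with $w_B = \langle a, e_B\rangle + t$ for some $a\in\RR^n$, $t\in\RR$, that is, of the affine-linear functions on $\RR^n$ restricted to the vertex set and taken modulo constants. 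Its dimension is the affine dimension of $\{e_B : B\in\cB\}$, which equals $\dim P_\cM$. Finally, by the computation of the dimension of a matroid polytope in terms of connected components due to Edmonds~\cite{Edmonds:1970} (see also Feichtner--Sturmfels~\cite{FeichtnerSturmfels:2005}), $\dim P_\cM = n - c$, which finishes the proof.

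I expect the only delicate point to be the passage to the subfan, i.e.\ checking that $\Dr(\cM)$ has the same lineality space as the full secondary fan of $P_\cM$; this is precisely where one uses that $P_\cM$ is a matroid polytope, to ensure $C_0 \subseteq \Dr(\cM)$. Alternatively one can argue entirely within the Pl\"ucker fan structure, whose lineality space is the set of $w$ for which all three terms in~\eqref{eq:pluckervect} are equal for every admissible $S,i,j,l,m$; by Theorem~\ref{thm:structures} such $w$ induce the trivial subdivision on every octahedral face, hence the trivial subdivision of $P_\cM$, so they are exactly the affine-linear height functions, and one again reads off $\dim P_\cM = n-c$. A routine point to keep in mind throughout is that every lattice point of a matroid subpolytope of $P_\cM$ is already a vertex of it, so that the identification of $C_0$ with the space of affine-linear functions is valid in this combinatorial setting.
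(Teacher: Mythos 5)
Your argument is correct and follows essentially the same route as the paper: the lineality space is identified with the height functions that are restrictions of (affine-)linear functions on $\RR^n$ --- equivalently the lineality space of the ambient secondary fan, which is the image of $e_i\mapsto\sum_{B\ni i}e_B$ --- and its dimension in $\RR^b/\RR{\bf 1}$ is the affine dimension of $\{e_B\}$, which equals $\dim P_\cM=n-c$ by Edmonds. Your extra care in checking both containments (using that the trivial subdivision is matroidal and that all lifted points $e_B$ are vertices of $P_\cM$) just makes explicit what the paper leaves implicit.
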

\begin{proof} Adding a linear functions to the height function of a regular subdivision does not change the subdivision. Therefore the linealty space is the image of the  map $\RR^n\to\RR^b$ with $e_i\mapsto \sum\limits_{B\ni i} e_B$. 
\end{proof}

\begin{example}
The local Dressian of the uniform matroid $\mathcal{U}_{2,4}$ coinsides with the Dressian $\Dr(2,4)$. This is a $5$-dimensional pure balanced fan in $\RR^6/\RR{\bf 1}$ consisting of three maximal cells and a $3$-dimensional lineality space.  \end{example}

\begin{example}
	The local Dressian of the matroid $\mathcal{U}_{1,2}\oplus\mathcal{U}_{1,2}$ is a $2$-dimensional linear space in $\RR^4/\RR{\bf 1}$ spanned by $e_{13}+e_{14}$ and $e_{13}+e_{23}$. The corresponding matroid polytope $P_{\mathcal{U}_{1,2}}\times P_{\mathcal{U}_{1,2}}$ is a square, which has no finer matroidal subdivision.
\end{example}

Let us discuss two examples of local Dressians of non-regular connected ternary $(3,6)$-matroids.
These are matroids that are representable over the field with three elements, but are not representable over the field with two elements.
\begin{example}\label{ex:matroid1}
	Let $\cM$ be the matroid on 6 elements and rank 3 whose bases are $\binom{[6]}{3}\setminus\{123,145,356\}$, see Figure ~\ref{fig:example}. The polytope $P_\cM$ is full dimensional so the local Dressian $\Dr(\cM)$ has a lineality space of dimension $5$ in $\RR^{16}=\RR^{17}/\RR{\bf 1}$. The local Dressian is $6$-dimensional and consists of three maximal cones. These cones correspond to the vertex split with the hyperplane $x_2+x_4+x_6 = 0$ and two $3$-splits, i.e., a subdivision into three maximal cells that intersect in a common cell of  codimension $2$. The three maximal cells of one of those $3$-splits is illustrated in Figure~\ref{fig:subdiv1}. 
\end{example}
\begin{figure}[t]	
	\centering
	\includegraphics[width=0.3\textwidth]{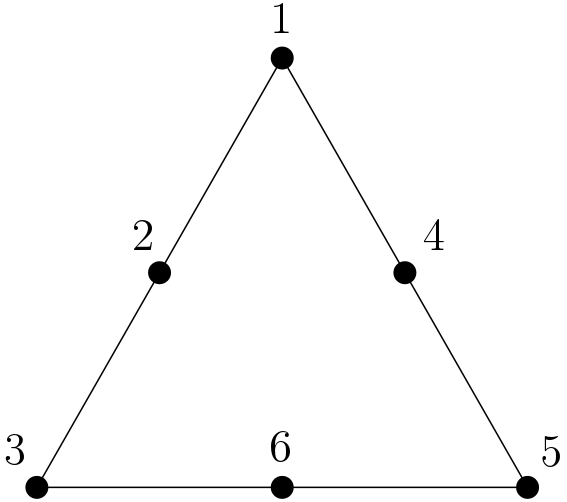}
	\caption{The terrnary matroid of Example~\ref{ex:matroid1}.}
\label{fig:example}
\end{figure}

\begin{figure}[t]	
\centering
\includegraphics[width=0.9\textwidth]{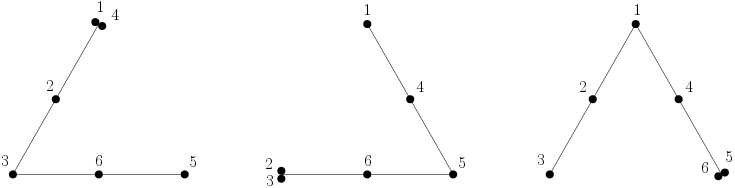}
\caption{The three matroids of one of the subdivisions of Example~\ref{ex:matroid1}. }
\label{fig:subdiv1}
\end{figure}

\begin{example}
	Let $\cM$ be the connected matroid given by the $14$ bases:
	\[
	135,\, 136,\, 145,\, 146,\, 156,\,
	235,\, 236,\, 245,\, 246,\, 256,\,
	345,\, 346,\, 356,\, 456 \enspace.
	\]
	The local Dressian $\Dr(\cM)$ consists of three maximal cones of dimension $6$ and a $5$-dimensional lineality space in $\RR^{13}$.
	In other words the polytope $P_\cM$ has four matroidal subdivisions.
	The trivial subdivision and three splits with respect to the hyperplanes $x_4+x_5+x_6 = 2$, $x_3+x_5+x_6=2$ or $x_3+x_4 = 1$.
\end{example}

\begin{remark} For any point $w$ in the local Dressian $\Dr(\cM)$ we can construct a tropical linear space $L_w$, by taking the intersection over $S \in \cM$ of the tropical hyperplanes defined by 
	\[
	f_S(w) = \bigoplus_{i\in S} w_{S\setminus i}\odot x_i \enspace .
	\]
	Further details on tropical linear spaces can be found in \cite[Section 4.4]{MaclaganSturmfels:2015}.
\end{remark}

\begin{proposition} Let $\cM$ and $\cM'$ be matroids such that $P_{\cM}$ is combinatorially isomorphic  to $P_{\cM'}$. Then, 
\[ Dr(\cM) \cong Dr(\cM'). \]
\end{proposition}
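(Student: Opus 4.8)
The plan is to turn the combinatorial isomorphism into a linear isomorphism of the ambient spaces and then to check that it carries $\Dr(\cM)$ onto $\Dr(\cM')$, invoking Theorem~\ref{thm:structures} at the end to pass between the two fan structures. A combinatorial isomorphism $\phi\colon P_\cM\to P_{\cM'}$ induces a bijection $\psi$ from the vertex set of $P_\cM$, which we identify with the set of bases $\cB$ of $\cM$, onto the set of bases $\cB'$ of $\cM'$; in particular $|\cB|=|\cB'|$. Let $\Phi\colon\RR^\cB\to\RR^{\cB'}$ be the coordinate relabelling $\Phi(e_B)=e_{\psi(B)}$. It fixes the line $\RR{\bf 1}$ and hence descends to a linear isomorphism $\RR^\cB/\RR{\bf 1}\to\RR^{\cB'}/\RR{\bf 1}$; the claim is that this restricts to an isomorphism of fans $\Dr(\cM)\cong\Dr(\cM')$.

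The core of the argument is that the Pl\"ucker fan structure on $\Dr(\cM)$ can be read off from the face lattice of $P_\cM$, so that $\phi$ transports it automatically. Each three-term Pl\"ucker relation of $\cM$ is indexed by a set $S\in\binom{[n]}{d-2}$ and distinct $i,j,l,m\notin S$, and after the non-bases have been set to zero it has three, two, or no surviving monomials (never one, since a single surviving monomial would force $\Dr(\cM)=\emptyset$, contradicting $0\in\Dr(\cM)$). If all six of the indices $Sij,Sil,Sim,Sjl,Sjm,Slm$ are bases, the relation has three surviving monomials, the six points $e_{Sij},\dots,e_{Slm}$ span an octahedral $3$-face of $P_\cM$, and both the solution locus of the relation and its stratification by the pattern of minimising monomials are determined by the three pairs of non-adjacent vertices of this octahedron. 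If fewer indices are bases, the relation degenerates to a single linear equation and is governed by a square $2$-face of $P_\cM$, or by the square base of a square-pyramidal $3$-face, together with its two diagonals. Here one uses that a $3$-face of a matroid polytope which is combinatorially an octahedron is, up to relabelling coordinates, the matroid polytope of $\cU_{2,4}$ (with loops and coloops added), and hence is spanned by points $e_{Sij},\dots,e_{Slm}$ for a unique $S$ and $\{i,j,l,m\}$; analogously for the square faces. Since $\phi$ is an isomorphism of face lattices it matches these faces, together with their non-adjacency relations and diagonals, between $P_\cM$ and $P_{\cM'}$, and therefore matches the three-term Pl\"ucker relations of $\cM$ with those of $\cM'$ so that $\Phi$ sends the solution locus of each relation of $\cM$, with its stratification, onto that of the corresponding relation of $\cM'$.

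Granting this, $\Phi$ maps the common solution set of the three-term Pl\"ucker relations of $\cM$ onto that of $\cM'$, so $\Phi(\Dr(\cM))=\Dr(\cM')$, and $\Phi$ is an isomorphism for the Pl\"ucker fan structures, because the cones of those structures are precisely the loci on which the set of minimising monomials is fixed in every relation. By Theorem~\ref{thm:structures} the Pl\"ucker fan structure coincides with the secondary fan structure on $\Dr(\cM)$ and on $\Dr(\cM')$, so $\Phi$ is the desired isomorphism.

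The step that I expect to require the most care is the ``reading off'' of the Pl\"ucker data from the face lattice: one has to go through the few combinatorial types of relevant faces of a matroid polytope --- octahedral and square-pyramidal $3$-faces, triangular and square $2$-faces --- identify each with a unique index $S,\{i,j,l,m\}$, and verify in every (degenerate) case that a bare combinatorial isomorphism already pins down the associated tropical condition. A variant that trades this bookkeeping for a further appeal to Theorem~\ref{thm:structures} is to show instead that $\phi$ and $\Phi$ carry matroid subdivisions of $P_\cM$ to matroid subdivisions of $P_{\cM'}$: by the proof of Theorem~\ref{thm:structures}, together with Corollary~\ref{cor:uniDressCone}, this only needs to be checked on the octahedral $3$-faces, on which $\phi$ restricts to an affine isomorphism.
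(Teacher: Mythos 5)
Your proposal is correct, but it takes a genuinely different route from the paper's. The paper works entirely on the secondary-fan side: it sends a matroid subdivision of $P_\cM$ to its image under the isomorphism, notes the image is matroidal because matroid subdivisions introduce no new edges, and then needs the image subdivision to be \emph{regular}; for that it invokes Remark~\ref{rem:isom}, by which a combinatorial isomorphism of matroid polytopes is realized by a coordinate permutation, embedding and reflection, i.e.\ affinely. You instead work on the Pl\"ucker side: you observe that after deleting non-bases the surviving three-term relations and their strata are encoded in the labelled face lattice --- a relation with three surviving terms corresponds to an octahedral $3$-face (which, being a $3$-dimensional matroid polytope with octahedral combinatorics, is a $\cU_{2,4}$-type face with $S$ and $\{i,j,l,m\}$ recovered as the intersection and union of its vertex sets), and a relation with two surviving terms imposes exactly the diagonal-equality condition on a square $2$-face (either the square itself or the base of a square-pyramidal $3$-face); a single surviving term cannot occur, as two non-bases among the remaining four sets force a parallelism contradiction, so your list of conditions is exhaustive. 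Hence the coordinate relabelling $\Phi$ identifies the defining conditions and their strata, and Theorem~\ref{thm:structures} transfers the identification to the secondary structure. What your route buys is independence from Remark~\ref{rem:isom} (which rests on work in preparation) and from any regularity argument, at the cost of the case bookkeeping you flag, which does go through. One caution: your closing variant --- transporting matroid subdivisions and citing Corollary~\ref{cor:uniDressCone} --- is weaker than it looks, because without an affine realization of $\phi$ the image of the subdivision induced by $w$ need not be the subdivision induced by $\Phi(w)$; that is precisely the gap the paper closes with Remark~\ref{rem:isom}, so the Pl\"ucker-side argument should remain your main line.
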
 
\begin{proof}
	A matroid subdivision of the polytope $P_\cM$ does not impose new edges.
	The isomorphism between the polytopes $P_\cM$ and $P_{\cM'}$ induces a subdivision of $P_{\cM'}$ as images of cells. Moreover, this subdivision is matroidal as the $1$-cells are edges of $P_{\cM'}$.
	This subdivision is regular, as the map between $P_\cM$ and $P_{\cM'}$ is a concatenation of a coordinate permutation, an embedding and a reflection. This follows from the explicit description in Remark~\ref{rem:isom}. 
\end{proof}

\begin{remark}\label{rem:isom} It can be shown that the two matroid polytopes of $\cM$ and $\cM'$ are combinatorially isomorphic
	if and only if the matroids are isomorphic up to loops, coloops or dual connected components. This is part of the work by Pineda-Villavicencio and Schr\"oter \cite{PinedaSchroeter:2018}.
\end{remark}

The following statement deals with Dressians of disconnected matroids. Let $\cM_1$ and $\cM_2$ be matroids $(E_1, \mathcal{B}_1)$ and $(E_2, \mathcal{B}_2)$ with $E_1$ and $E_2$ disjoint. We define the \emph{direct sum} of $\cM_1$ and $\cM_2$ as 
\[\cM_1 \oplus \cM_2 = (E_1 \cup E_2, B_1 \cup B_2 \, \text{with} \, B_1 \in \mathcal{B}_1 \, \text{and} \, B_2 \in \mathcal{B}_2).
\] 
\begin{theorem}\label{thm:disconnected} Let $\cM_1$ and $\cM_2$ be matroids with disjoint element sets. Then
\[ \Dr(\cM_1 \oplus \cM_2) = \Dr(\cM_1) \times \Dr (\cM_2). \]
\end{theorem}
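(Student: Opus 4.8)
The plan is to prove the identity at the level of the Plücker fan structure, by exhibiting an explicit linear isomorphism realizing it; Theorem~\ref{thm:structures} then upgrades this for free to the secondary fan structure. Write $[n]=E_1\sqcup E_2$, let $d_k$ be the rank of $\cM_k$, and identify the basis set of $\cM_1\oplus\cM_2$ with $\mathcal B_1\times\mathcal B_2$ via $B_1\cup B_2\leftrightarrow(B_1,B_2)$. Consider the linear map $\Phi\colon\RR^{\mathcal B_1}\oplus\RR^{\mathcal B_2}\to\RR^{\mathcal B_1\times\mathcal B_2}$ with $\Phi(u,v)_{(B_1,B_2)}=u_{B_1}+v_{B_2}$. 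Its kernel is spanned by $(\mathbf 1,-\mathbf 1)$, and it carries $\RR\mathbf 1\oplus 0$ onto $\RR\mathbf 1$, so it descends to an \emph{injective} linear map $\overline\Phi$ of $\RR^{\mathcal B_1}/\RR\mathbf 1\times\RR^{\mathcal B_2}/\RR\mathbf 1$ into $\RR^{\mathcal B_1\times\mathcal B_2}/\RR\mathbf 1$. Under this identification it suffices to show $\Dr(\cM_1\oplus\cM_2)=\overline\Phi\bigl(\Dr(\cM_1)\times\Dr(\cM_2)\bigr)$ as sets, since linearity of $\overline\Phi$ then makes it an isomorphism of fans onto its image.

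The core of the argument is that every $w\in\Dr(\cM_1\oplus\cM_2)$ is additively separable. Given single exchanges $B_1'=(B_1\setminus e)\cup f$ in $\cM_1$ and $B_2'=(B_2\setminus g)\cup h$ in $\cM_2$, set $S=(B_1\setminus e)\cup(B_2\setminus g)$, a $(d-2)$-subset, and look at the three term Plücker relation of $\cM_1\oplus\cM_2$ on $(S;e,f,g,h)$ whose ``dead'' monomial is $p_{S\cup ef}\,p_{S\cup gh}$. Since $e,f\in E_1$ and $g,h\in E_2$, the set $S\cup ef$ meets $E_1$ in $B_1\cup f$ (too large) and $S\cup gh$ meets $E_1$ in $B_1\setminus e$ (too small), so neither is a basis and that monomial contributes $\infty$. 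The remaining four indices $S\cup eg,\,S\cup fh,\,S\cup eh,\,S\cup fg$ are exactly $(B_1,B_2),(B_1',B_2'),(B_1,B_2'),(B_1',B_2)$ and are all bases, so the tropical relation forces the rectangle identity $w_{(B_1,B_2)}+w_{(B_1',B_2')}=w_{(B_1,B_2')}+w_{(B_1',B_2)}$. Rewritten, $w_{(B_1,B_2)}-w_{(B_1,B_2')}=w_{(B_1',B_2)}-w_{(B_1',B_2')}$ whenever $B_1,B_1'$ and $B_2,B_2'$ are adjacent in the respective base exchange graphs; telescoping along a path in the base exchange graph of $\cM_1$ (which is connected) shows this difference is independent of $B_1$ for each adjacent pair $B_2,B_2'$, and telescoping the resulting quantity along a path in the connected base exchange graph of $\cM_2$ yields $w_{(B_1,B_2)}=u_{B_1}+v_{B_2}$ with $u_{B_1}:=w_{(B_1,B_2^0)}$ and $v_{B_2}:=w_{(B_1^0,B_2)}-w_{(B_1^0,B_2^0)}$ for fixed basepoints. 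Hence $w=\Phi(u,v)$.

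It then remains to decide, for a separable $w=\Phi(u,v)$, when $w\in\Dr(\cM_1\oplus\cM_2)$, which is done by classifying the three term relations of $\cM_1\oplus\cM_2$ according to how $\{i,j,l,m\}$ splits between $E_1$ and $E_2$. If all four lie in $E_1$ (symmetrically $E_2$), then unless $S\cap E_2$ is a basis of $\cM_2$ no vertex of the octahedron is a basis, and when it is a basis the substitution $w_{S\cup T}=u_{(S\cap E_1)\cup T}+v_{S\cap E_2}$ turns the relation into the three term relation of $\cM_1$ on $(S\cap E_1;i,j,l,m)$ shifted by the constant $2v_{S\cap E_2}$, which leaves its validity unchanged. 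If the four elements split $3$--$1$, a cardinality count shows that no monomial of the relation has both factors equal to bases, so it is vacuous; if they split $2$--$2$, at most two of the three monomials survive, and expanding those two through $\Phi$ gives equal tropical values term by term, so the relation again holds automatically. Therefore $\Phi(u,v)\in\Dr(\cM_1\oplus\cM_2)$ precisely when $u$ satisfies every three term relation of $\cM_1$ and $v$ every three term relation of $\cM_2$, i.e.\ when $u\in\Dr(\cM_1)$ and $v\in\Dr(\cM_2)$. Combined with the separability step this proves the theorem.

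I expect the separability step to be the main obstacle: one has to choose exactly the three term relation whose third monomial vanishes for cardinality reasons, and then propagate the resulting local rectangle identities to a global product decomposition, which is where connectivity of the base exchange graph is essential. (For the easy inclusion $\overline\Phi(\Dr(\cM_1)\times\Dr(\cM_2))\subseteq\Dr(\cM_1\oplus\cM_2)$ one could alternatively invoke Corollary~\ref{corollary:localSpeyer}: a separable height function induces the product of the two subdivisions, and each cell $C_1\times C_2$ is a matroid polytope because $P_{\cM_1'}\times P_{\cM_2'}=P_{\cM_1'\oplus\cM_2'}$.)
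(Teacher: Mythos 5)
Your proposal is correct and takes essentially the same route as the paper: the same sum map $(u,v)\mapsto u\otimes v$, the same rectangle identities (you extract them from the degenerate three term relations with $e,f\in E_1$, $g,h\in E_2$, while the paper gets them from the fact that the corresponding square faces of $P_{\cM_1\oplus\cM_2}$ cannot be subdivided), the same use of connectivity of the base exchange graph, and your case analysis of how $\{i,j,l,m\}$ splits between $E_1$ and $E_2$ is the algebraic counterpart of the paper's observation that every octahedron in $P_{\cM_1\oplus\cM_2}$ has the form $O_1\times\{e_{B_2}\}$ or $\{e_{B_1}\}\times O_2$. The only organizational difference is that you establish additive separability of $w$ directly and then characterize which separable vectors satisfy the relations, whereas the paper fixes a basepoint $B_1\sqcup B_2$, defines the explicit inverse $\phi$, and checks it is well defined and two-sided; the underlying computations coincide.
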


\begin{proof}
We have the map
\begin{align*}
\otimes: \enspace \Dr (\cM_1) \times \Dr(\cM_2) &\rightarrow \Dr(\cM_1 \oplus \cM_2)\\
(w,v) &\mapsto w\otimes v
\end{align*}
where $(w\otimes v)_{B_1\sqcup B_2} := w_{B_1}+v_{B_2}$ for any $B_1\in \cM_1$ and $B_2\in \cM_2$. To check that $w\otimes v$ satisfies the tropical Pl\"ucker relations notice the following:  any octahedron contained in $P_{\cM_1\oplus\cM_2}$ must be of the form $\{e_{B_1}\}\times O_2$,  with $B_1\in \cM_1$ and $O_2$ octahedron contained in $ P_{\cM_2}$, or  $O_1\times\{e_{B_2}\}$, with $B_2\in \cM_2$ and  $O_1$ octahedron contained in $P_{\cM_1}$. Then the Pl\"ucker relations follow from those of $\Dr(\cM_1)$ and $\Dr(\cM_2)$.
In particular, the cone where $w\otimes v$ lies is determined by the cones where $w$ and $v$ lie, so $\otimes$ maps cones into cones.

To construct the inverse of $\otimes$, we fix a basis $B_1\sqcup B_2\in \cM_1\oplus\cM_2$ and we define the map
\begin{align*}
\phi: \enspace \Dr(\cM_1 \oplus \cM_2) &\rightarrow \Dr(\cM_1) \times \text{Dr} (\cM_2)\\
w &\mapsto (\phi_1(w),\phi_2(w))
\end{align*}
where $\phi_1(w)_{A_1} := w_{A_1\sqcup B_2}$ and $\phi_2(w)_{A_2} := w_{B_1\sqcup A_2}$ for any $A_1\in \cM_1$ and any $A_2\in \cM_2$. It is straight forward to verify that the Pl\"ucker relations satisfied by $w$ imply that the projections $\phi_1(w)$ and $\phi_2(w)$ satisfy them as well. In particular, $\phi$ maps cones to cones.

Now we prove that $\phi$ is independent of the choice of basis $B_1\sqcup B_2$. We do this by contradiction. Suppose it is not, without loss of generality we can assume there exist $B_1 \sqcup B_2$ and $B_1\sqcup B_2'$, with  $B_2$ and $B_2'$ of distance 1 such that $\phi$ does not agree for these two choices. Clearly $\phi_2$ is the same for both choices, so we look at $\phi_1$. Let $A, A'\in \cM_1$ be bases at distance 1. We have that the points $e_{A\sqcup B_2},e_{A\sqcup B_2'},e_{A'\sqcup B_2'},e_{A'\sqcup B_2}$ form a square face of $P_{\cM_1\oplus\cM_2}$. This square can not be subdivided, so
\[
w_{A\sqcup B_2}-w_{A'\sqcup B_2}=w_{A\sqcup B_2'}-w_{A'\sqcup B_2'} \enspace .
\]
But this means that the difference of $\phi_1$ for $A$ and $A'$ is independent of the choice of $B_2$. By connectivity of the graph of $P_{\cM_1}$, we can conclude that $\phi_1$ is independent of the choice of $B_2$.

	We are left with proving that $\phi$ is the inverse of $\otimes$. First we check that for any $(v,w)\in \Dr(\cM_1) \times \Dr(\cM_2) $ we have that $\phi(w\otimes v) = (w,v)$. To see this, notice that $\phi_1(w\otimes v)_A = (w\otimes v)_{A\sqcup B_2} = w_A +v_{B_2}$ for any $A\in \cM_1$. But $v_{B_2}$ is a constant independent of $A$, so $\phi_1(w\otimes v) = w$ in the tropical torus. Analogously, we get that $\phi_2(w\otimes v) = v$.

Now we check the other direction, that is, for any $w\in \Dr(\cM_1 \oplus \cM_2)$ we have $w = \phi_1(w) \otimes \phi_2(w)$. Consider two bases of $(\cM_1 \oplus \cM_2)$ at distance 1. Without loss of generality let them be $A_1\sqcup A_2$ and $A_1\sqcup A_2'$. We have that 
\begin{align*}
 (\phi_1(w) \otimes \phi_2(w))_{A_1\sqcup A_2} -  (\phi_1(w) \otimes &\phi_2(w))_{A_1\sqcup A_2'} \\
 = & \phi_1(w)_{A_1}+\phi_2(w)_{A_2}-\phi_1(w)_{A_1}-\phi_2(w)_{A_1'}\\
 = & w_{A_1\sqcup B_2}+w_{B_1\sqcup A_2} - w_{A_1\sqcup B_2}-w_{B_1\sqcup A_2'}\\
= & w_{B_1\sqcup A_2}-w_{B_1\sqcup A_2'} \enspace .
\end{align*}

	We have already shown that $\phi$ is independent of the choice of $B_1$, so we may assume $B_1 =A_1$. Hence, the above equals $w_{A_1\sqcup A_2}-w_{A_1\sqcup A_2'}$. By connectivity of the graph of $P_{\cM_1\oplus\cM_2}$, we get $w = \phi_1(w) \otimes \phi_2(w)$ as we wanted. %\todo{I think we need to explain this last step in more detail}

Therefore, the maps $\phi$ and $\otimes$ are bijective linear maps which send cones to cones, which implies $\Dr(\cM_1 \oplus \cM_2) =\Dr(\cM_1) \times \Dr(\cM_2)$.
\end{proof}

\begin{remark}
	The  statement above generalizes Theorem 4 by Chatelain and Ram\'irez \cite{ChatelainRamirez:2014} which deals with sequences of weakly compatible hyperplane splits. While the article by Joswig and Schr\"oter \cite{JoswigSchroeter:2017} provides the case of sequences of strongly compatible hyperplane splits and the matroid polytopes that occur in these matroid subdivisions.  We refer to Herrmann and Joswig~\cite{HerrmannJoswig:2008} for the definitions.
\end{remark}

Let $\cM$ be a matroid $(E, \mathcal{B})$. Two elements $e$ and $e'$ in $E$ are \emph{parallel} if $\rank(\{e,e'\}) = 1$. We denote this by $e \parallel e'$. Remark that this  implies that $\cM\backslash e = \cM \backslash e'$.

\begin{theorem}\label{thm:parallel} Let $\cM$ be a matroid and $e \parallel e'$ in $\cM$. Then
\[ \bigslant{\Dr(\cM)}{\lin\Dr(\cM)} \cong \bigslant{\Dr(\cM \setminus e')}{\lin\Dr(\cM \setminus e')} \]
and $\dim\lin\Dr(\cM) = \dim\lin\Dr(\cM \setminus e') +1$. 
\end{theorem}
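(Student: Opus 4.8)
The plan is to recast the statement as bookkeeping about the three–term Pl\"ucker relations, using Theorem~\ref{thm:structures} to move freely between the Pl\"ucker and the secondary fan structures, and to use that $e\parallel e'$ makes $\{e,e'\}$ a circuit, so no basis of $\cM$ contains both $e$ and $e'$. First I would record the combinatorial dictionary: the bases of $\cM$ split into those avoiding both of $e,e'$, those containing $e$ only, and those containing $e'$ only; basis exchange gives an involution $B\leftrightarrow\bar B:=(B\setminus e')\cup e$ matching the last two families, and the bases of $\cM\setminus e'$ are exactly those of the first two. This already yields the dimension statement: $P_{\cM\setminus e'}=P_\cM\cap\{x_{e'}=0\}$, and since $e'$ is neither a loop (being parallel to $e$) nor a coloop (otherwise $e$ would be a loop), this is a facet of $P_\cM$, so $\dim P_{\cM\setminus e'}=\dim P_\cM-1$; by the corollary identifying $\dim\lin\Dr(\cM)$ with $\dim P_\cM$ we then get $\dim\lin\Dr(\cM)=\dim\lin\Dr(\cM\setminus e')+1$.

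For the isomorphism I would produce an explicit linear map and an explicit linear section. Let $\pi$ be the coordinate projection $\RR^{\mathcal B(\cM)}\to\RR^{\mathcal B(\cM\setminus e')}$ forgetting the coordinates indexed by bases containing $e'$, and let $\iota(v)_B:=v_{\bar B}$, so $\iota(v)$ copies $v$ on bases avoiding $e'$ and gives each basis $B$ containing $e'$ the value of its partner $\bar B$; then $\pi\circ\iota=\mathrm{id}$ exactly. The steps are: (i) $\pi(\Dr(\cM))\subseteq\Dr(\cM\setminus e')$ and $\pi$ sends cones to cones — immediate, since $P_{\cM\setminus e'}$ is a face of $P_\cM$, so every three–term relation of $\cM\setminus e'$ is literally one of $\cM$ in the same variables; (ii) $\iota(\Dr(\cM\setminus e'))\subseteq\Dr(\cM)$ and $\iota$ sends cones to cones; (iii) $\iota\circ\pi$ equals the identity modulo $\lin\Dr(\cM)$; (iv) both $\pi$ and $\iota$ carry lineality into lineality, a short check against the generators $e_i\mapsto\sum_{B\ni i}e_B$. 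For (ii) I would go through how $\{e,e'\}$ can meet the index data $(S;i,j,k,l)$ of a three–term relation of $\cM$: if $e'$ is absent from it, $\iota(v)$ agrees with $v$ on all six Pl\"ucker coordinates; if $e'$ is present but $e$ is absent, replacing $e'$ by $e$ throughout turns the relation into a genuine three–term relation of $\cM\setminus e'$ whose six $v$–values coincide with the six $\iota(v)$–values of the original; and if both $e,e'$ are present, the relation is trivially satisfied, because either all three of its terms are $+\infty$ or one is $+\infty$ and the other two are forced equal. Since the Pl\"ucker and secondary structures agree, (ii) follows.

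The heart of the matter is the claim behind (iii): \emph{for every $w\in\Dr(\cM)$ the quantity $w_B-w_{\bar B}$ does not depend on the basis $B$ containing $e'$.} The plan is to reduce to $B,B'$ adjacent in the vertex–edge graph of $P_{\cM/e'}$, which is connected because the graph of any matroid polytope is; writing such a pair as $B=T\cup\{f,e'\}$ and $B'=T\cup\{g,e'\}$ with $|T|=d-2$ and $f,g,e,e'$ pairwise distinct and disjoint from $T$, the three–term Pl\"ucker relation of $\cM$ at $(T;f,g,e,e')$ says that
\[
\min\bigl(w_{T\cup\{f,g\}}+w_{T\cup\{e,e'\}},\ w_{\bar B}+w_{B'},\ w_{B}+w_{\bar B'}\bigr)
\]
is attained at least twice; but $T\cup\{e,e'\}$ is not a basis, so the first term is $+\infty$ and the remaining two — which are finite — must coincide, i.e.\ $w_B-w_{\bar B}=w_{B'}-w_{\bar B'}$. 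Granting the claim, $w-\iota(\pi(w))$ is supported on the bases containing $e'$ with the constant value $w_B-w_{\bar B}$ there; that is exactly this constant times the image of $e_{e'}$ under $e_i\mapsto\sum_{B\ni i}e_B$, hence lies in $\lin\Dr(\cM)$. Combining (i)–(iv), $\pi$ descends to a bijective morphism of fans $\Dr(\cM)/\lin\Dr(\cM)\to\Dr(\cM\setminus e')/\lin\Dr(\cM\setminus e')$ with inverse induced by $\iota$, which is the asserted isomorphism.

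The step I expect to cost the most care is (ii): the case analysis itself is short, but one must handle carefully the degenerate three–term relations in which one or two of the six Pl\"ucker coordinates index non-bases and are therefore $+\infty$. There is also a more geometric route to (ii) — $P_\cM$ decomposes along its face $P_\cM\cap\{x_e=1\}$ into the facet $P_{\cM\setminus e'}$ and a prism over that face in the direction $e_{e'}-e_e$ on which $\iota(v)$ is constant, so $\iota(v)$ induces a matroid subdivision on each of these two faces — but deducing matroidality of the whole subdivision from this still comes down to the octahedral faces lying in neither piece, so I would most likely keep the Pl\"ucker bookkeeping as the main argument.
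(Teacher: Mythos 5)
Your proposal is correct and takes essentially the same route as the paper: project away the coordinates of bases containing $e'$, show that the swap--difference $w_B-w_{(B\setminus e')\cup e}$ is constant by combining connectivity with the degenerate three--term relation at $(T;f,g,e,e')$ whose term $w_{Tfg}+w_{Tee'}$ is infinite (the paper states the same identity geometrically, as the square face spanned by the four bases not being subdivided), and absorb the constant into the lineality generator $\sum_{B\ni e'}e_B$; your explicit section $\iota$ merely spells out the surjectivity that the paper asserts. One small repair: being neither a loop nor a coloop does not by itself make $P_\cM\cap\{x_{e'}=0\}$ a facet (delete an element of $\cU_{2,3}$), so for the dimension count you should argue, as the paper does, that the circuit $\{e,e'\}$ forces $\cM$ and $\cM\setminus e'$ to have the same number of connected components.
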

\begin{proof}
	Clearly, $\cM$ contains the circuit $\{e,e'\}$. Hence, the number of connected components of $\cM$ is the same as the number of connected components of $\cM\setminus e'$. It  follows that $\dim\lin\Dr(\cM) = \dim\lin\Dr(\cM\setminus e')+1$.

	The projection $\Dr(\cM)\to\Dr(\cM\setminus e')$ that forgets the coordinates that correspond to bases that contain $e'$ is surjective.
	Our goal is to show that this projection is injective if we quoten by the lineality spaces.
	Let $w\in\Dr(\cM)$ and $B_e$ be a basis of $\cM$ that contains $e$ and $B_{e'} = B_e \setminus \{e\} \cup \{e'\}$. We may assume that $w_{B_e} = w_{B_{e'}}$ as the lineality space of $\Dr(\cM)$ contains $\sum_{B\ni e'} e_B$.
	Let $B'_{e'}$ be a basis of $\cM$ and $B'_e = B'_{e'} \setminus \{e'\} \cup \{e\}$ of distance $\size B'_e\setminus B_e = 1$. That is $e_{B_e}$, $e_{B'_e}$, $e_{B_{e'}}$, $e_{B'_{e'}}$, form a square in the vertex-edge graph of $P_\cM$.
	The set $B_e\cap B'_{e'}\cup\{e,e'\}$ is a non-basis of distance $1$ to those four bases. Therefore, the square is not subdivided by the regular subdivision induced by $w$.
	We conclude that $w_{B_e}+w_{B'_{e'}} = w_{B_{e'}}+w_{B'_e}$ and by our assumption $w_{B'_{e'}} = w_{B'_e}$.
	Iterating our argument shows that $w_{B} = w_{B\setminus\{e\}\cup \{e'\}}$ for any basis $B$ that contains $e$. As the basis exchange graph of a matroid is connected.
	Therefore, we derive that the projection is injective up to lineality and therefore the desired isomorphism.
\end{proof}

The combination of Theorem~\ref{thm:disconnected} and Theorem~\ref{thm:parallel} allows to deduce the local Dressian $\Dr(\cM)$ of an arbitary matroid $\cM$ from the simplifications of its connected componenets.  

\section{Indecomposable Matroids}
\noindent

In this section, we begin with  focusing on local Dressians of binary matroids, i.e., those matroids which are representable over the field with two elements. Recall that any matroid obtained from successive deletions and contractions form a matroid $\cM$ is a minor of $\cM$. The following is a useful characterization of binary matroids in terms of their minors.
\begin{proposition}[Tutte\cite{Tutte:1958}]\label{prop:binary_minors}
	A matroid is binary if and only if it has no minor isomorphic to the uniform matroid $\cU_{2,4}$.
\end{proposition}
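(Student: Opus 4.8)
The plan is to prove the two implications separately: the forward direction is elementary, while the converse is the substance of Tutte's theorem and I expect it to require genuine work. For the easy direction I would first recall that $\cU_{2,4}$ is not binary, as already observed above: the projective line over $\FF_2$ has only three points, so four elements of rank two cannot be represented over $\FF_2$ without creating a parallel pair. Next I would note that the class of binary matroids is closed under minors --- if $A$ is a matrix over $\FF_2$ with $\cM = \cM[A]$, then deleting the column of $e$ represents $\cM\setminus e$, and, when $e$ is not a loop, row-reducing so that the column of $e$ becomes a standard basis vector and then deleting that row and column represents $\cM/e$, in all cases remaining over $\FF_2$. Hence no binary matroid can have a $\cU_{2,4}$ minor.

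For the converse I would pass through the orthogonality criterion: a matroid $\cM$ is binary if and only if $|C\cap C^*|$ is even for every circuit $C$ and every cocircuit $C^*$ of $\cM$. Assuming $\cM$ has no $\cU_{2,4}$ minor, I would prove the even-intersection property by contraposition. Suppose some circuit $C$ and cocircuit $C^*$ satisfy $|C\cap C^*|$ odd; by orthogonality $|C\cap C^*|\neq 1$, so $|C\cap C^*|\geq 3$. By a sequence of deletions and contractions of elements outside $C\cap C^*$ --- chosen so that $C$ stays a circuit and $C^*$ stays a cocircuit of the minor, which one checks using that contracting an element not on a circuit preserves that circuit together with the dual statement for cocircuits --- one reduces to a minor containing a circuit and a cocircuit that meet in exactly three elements, and from such a configuration one extracts a $\cU_{2,4}$ minor by a short case analysis. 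This contradicts the hypothesis on $\cM$.

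For the other half of the criterion, assume every circuit meets every cocircuit of $\cM$ in an even set. I would fix a basis $B$ and let $\mathcal N$ be the binary matroid on the same ground set whose fundamental circuits with respect to $B$ coincide with those of $\cM$; concretely, in the matrix $[\,I \mid D\,]$ over $\FF_2$ representing $\mathcal N$, the column of $e\notin B$ is the indicator of the fundamental circuit $C(e,B)\setminus e\subseteq B$. Then $\cM$ and $\mathcal N$ have the same rank, the same fundamental circuits, and the same fundamental cocircuits, so their cocircuit spaces over $\FF_2$ coincide; the even-intersection hypothesis then shows that every circuit of $\cM$ is orthogonal to every cocircuit of $\mathcal N$, hence lies in the circuit space of $\mathcal N$, and conversely every minimal nonzero member of that space is a circuit of $\cM$. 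Therefore $\cM = \mathcal N$ is binary.

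I expect the main obstacle to be the minor-extraction step in the converse: one must track carefully that the chosen circuit and cocircuit persist through the whole sequence of deletions and contractions, and producing the $\cU_{2,4}$ from a circuit and a cocircuit meeting in three elements needs a genuine, if short, case analysis. A secondary difficulty is the careful statement and proof of the orthogonality criterion itself, which is really the heart of Tutte's theorem; an alternative organisation would instead take a minor-minimal non-binary matroid and argue directly that it must be $\cU_{2,4}$, but the orthogonality route seems cleaner to write. For a complete treatment one may invoke Tutte \cite{Tutte:1958} or Oxley \cite{Oxley:2011}.
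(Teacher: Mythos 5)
First, note that the paper does not prove this proposition at all: it is imported as Tutte's classical excluded--minor theorem with the citation \cite{Tutte:1958}, so there is no internal argument to compare with. Your forward direction (binary matroids form a minor-closed class and $\cU_{2,4}$ is not binary) is correct and complete, and your overall route for the converse --- the circuit--cocircuit orthogonality criterion plus the fundamental-circuit incidence matrix $[\,I\mid D\,]$ over $\FF_2$ --- is the standard textbook proof (cf.\ \cite{Oxley:2011}).

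However, as written the converse has genuine gaps. (1) The reduction step fails as described: deletions and contractions of elements \emph{outside} $C\cap C^*$ never change $C\cap C^*$, so if $|C\cap C^*|\geq 5$ your procedure cannot produce a circuit and cocircuit meeting in exactly three elements; indeed, pushed to the end it would yield a minor whose ground set is $C\cap C^*$ and which has this set as both a circuit and a cocircuit, which is impossible for rank reasons. Passing from ``some odd intersection'' to ``some intersection of size exactly three'' is itself one of the nontrivial equivalences in Tutte's theorem and needs a separate induction, not just this pruning. (2) The auxiliary claim ``contracting an element not on a circuit preserves that circuit'' is false in general: if $e\notin C$ is parallel to some $f\in C$, then $f$ becomes a loop in $\cM/e$ and $C$ is no longer a circuit; the correct tool is that for $e\notin C\cup C^*$ at least one of $\cM\backslash e$, $\cM/e$ keeps $C$ a circuit and $C^*$ a cocircuit. (3) The two steps carrying the real content --- extracting a $\cU_{2,4}$ minor from a circuit and cocircuit meeting in exactly three elements, and, in the other half, the converse inclusion that every circuit of the binary matroid $\mathcal{N}$ is a circuit of $\cM$ (you only establish that circuits of $\cM$ lie in the cycle space of $\mathcal{N}$) --- are asserted rather than proved. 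Deferring these to \cite{Tutte:1958} or \cite{Oxley:2011} is perfectly acceptable and is in effect what the paper itself does, but then the intermediate claims above should either be corrected or dropped.
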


From Proposition~\ref{prop:binary_minors} we derive a generalization of Theorem 3 by Chatelain and Ram\'irez \cite{ChatelainRamirez:2011} 
which states that a matroid polytope of a binary matroid can not be splited into two matroid polytopes.

\begin{definition}
A matroid is said to be indecomposable if and only if its polytope does not allow a non-trivial matroid subdivision.
\end{definition}

\begin{corollary} Let $\cM$ be a binary matroid. Then the local Dressian $\Dr(\cM)$ linear space. In particular, the matroid polytope $P_{\cM}$ is indecomposable.
\end{corollary}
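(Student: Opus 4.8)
The plan is to reduce the claim to the case of a connected simple binary matroid, where the octahedral faces of the matroid polytope can be analyzed directly. First I would observe that the statement is invariant under the reductions provided by Theorem~\ref{thm:disconnected} and Theorem~\ref{thm:parallel}: if $\cM = \cM_1\oplus\cM_2$, then $\Dr(\cM)$ is a linear space if and only if both $\Dr(\cM_i)$ are, since a product is a linear space exactly when both factors are; and passing from $\cM$ to its simplification $\cM\setminus e'$ (removing one of a pair of parallel elements) only changes $\Dr(\cM)$ by a one-dimensional lineality summand, by Theorem~\ref{thm:parallel}. Loops contribute nothing. Hence it suffices to treat $\cM$ connected and simple, and to show that in that case $P_\cM$ admits no nontrivial matroid subdivision, which by Corollary~\ref{corollary:localSpeyer} and Theorem~\ref{thm:structures} is equivalent to $\Dr(\cM)$ being a linear space (a fan with a single maximal cone, namely its lineality space).

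Next I would use Corollary~\ref{cor:uniDressCone} (equivalently, Theorem~\ref{thm:structures}): a matroid subdivision of $P_\cM$ is trivial if and only if it restricts trivially to every octahedral $3$-face of $P_\cM$. So the whole problem reduces to a local statement: show that no octahedral face of $P_\cM$ can be split. An octahedral face of $P_\cM$ is the convex hull of six bases $e_{Sij}, e_{Sil}, e_{Sim}, e_{Sjl}, e_{Sjm}, e_{Slm}$, all of which are bases of $\cM$, with $S\in\binom{[n]}{d-2}$ and $i,j,l,m$ distinct and not in $S$. The key observation is that such an octahedral face being present in $P_\cM$ means precisely that the minor $(\cM/S)\backslash([n]\setminus(S\cup\{i,j,l,m\}))$, restricted appropriately, has all six $2$-subsets of $\{i,j,l,m\}$ as bases; that is, $\cM$ has a minor isomorphic to $\cU_{2,4}$. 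By Tutte's Proposition~\ref{prop:binary_minors}, a binary matroid has no such minor. Therefore $P_\cM$ has no octahedral face at all, only tetrahedra as $3$-faces, and tetrahedra cannot be subdivided; by Corollary~\ref{cor:uniDressCone} the only matroid subdivision of $P_\cM$ is the trivial one, so $\Dr(\cM)$ consists of a single cone equal to its lineality space, i.e., it is a linear space. The last sentence of the corollary, that $P_\cM$ is indecomposable, is then just the definition.

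I expect the only real content to be the correspondence ``$P_\cM$ has an octahedral $3$-face $\iff$ $\cM$ has a $\cU_{2,4}$-minor.'' The forward direction is the delicate one: given six bases of $\cM$ spanning an octahedron, one extracts $S$ (the common $(d-2)$-subset) and the four elements $i,j,l,m$, and must check that contracting $S$ and deleting the complement genuinely yields $\cU_{2,4}$ rather than some matroid with extra bases or missing ones — but since the six vertices are exactly the vertices of an octahedral face, and faces of matroid polytopes are matroid polytopes (they are $P_{\cM'}$ for the corresponding minor $\cM'$), the minor on ground set $\{i,j,l,m\}$ has basis set exactly $\binom{\{i,j,l,m\}}{2}$, which is $\cU_{2,4}$. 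The reverse direction is immediate. One should also double-check the reduction handles the edge case where $\cM$ has no octahedral faces for trivial reasons (e.g.\ $n < 4$ or $d\le 1$), where $P_\cM$ is a simplex and the claim is clear.
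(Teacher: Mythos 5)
Your proposal is correct and follows essentially the same route as the paper: octahedral $3$-faces of $P_\cM$ correspond exactly to $\cU_{2,4}$-minors, which Tutte's theorem excludes for binary matroids, and then Corollary~\ref{cor:uniDressCone} (the $3$-skeleton determination from Theorem~\ref{thm:structures}) forces every matroid subdivision of $P_\cM$ to be trivial. Your preliminary reduction to connected simple matroids via Theorems~\ref{thm:disconnected} and~\ref{thm:parallel} is harmless but unnecessary, since the octahedral-face argument applies to $P_\cM$ directly.
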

\begin{proof}
	Let $\cM$ be a binary matroid and $P_\cM$ its matroid polytope.
	The $3$-skeleton of the polytope $P_\cM$ does not contain a octahedral face as such a face corresponds to a minor isomorphic to the uniform matroid $\cU_{2,4}$.
	From Corollary~\ref{cor:uniDressCone} we deduce that $P_\cM$ only has a trivial matroid subdivision.
	That is the Dressian is a linear space and $\cM$ is indecomposable.
\end{proof}

A matroid can be indecomposable even if its matroid polytope contains octahedral faces. Consider the simple matroid $\cP$ on $13$ elements and rank $3$ given by the ternary projective plane. This matroid is not binary and its matroid polytope has $117$ octahedral faces.
\begin{proposition}\label{prop:indecomp}
	The local Dressian $\Dr(\cP)$ is a $12$-dimensional linear space.
	In particular, the matroid $\cP$ is indecomposable.
\end{proposition}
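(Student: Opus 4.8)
The plan is to reduce the statement to a finite check on the $3$-skeleton of $P_{\cP}$ and then to rule out every non-trivial matroid subdivision. By the analysis in the proof of Theorem~\ref{thm:structures} (of which Corollary~\ref{cor:uniDressCone} is the hypersimplex case), a matroid subdivision of $P_{\cP}$ is completely determined by its restriction to the $3$-dimensional skeleton, and among all $3$-faces of a matroid polytope the only ones carrying a non-trivial matroid subdivision are octahedra (tetrahedra, square pyramids, triangular prisms and cubes admit none). Hence a matroid subdivision of $P_{\cP}$ is trivial as soon as it divides no octahedral face, and it suffices to show that no matroid subdivision $\Sigma$ of $P_{\cP}$ divides any octahedral face. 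Granting this, $\Dr(\cP)$ coincides with its lineality space, which by the description of the lineality space has dimension $\dim P_{\cP}=n-c=13-1=12$, since the ternary projective plane is a simple matroid of rank $3$ and hence connected. Indecomposability then follows immediately: a non-trivial matroid subdivision is regular and induced by a height vector outside the lineality space, contradicting $\Dr(\cP)=\lin\Dr(\cP)$.

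To organize the remaining check, recall that an octahedral face of $P_{\cP}$ is indexed by a point $m$ of $\cP$ together with a choice of one point on each of the four lines of $\cP$ through $m$, its six vertices being $e_{\{m,x,y\}}$ for $x\neq y$ among those four points. I would then use two constraints on any matroid subdivision $\Sigma$. First, a $2$-dimensional square face of $P_{\cP}$ is never subdivided: its only possible cut runs along a diagonal $e_a+e_c-e_b-e_d$, which is parallel to no edge $e_i-e_j$ of the hypersimplex, so it cannot produce matroid polytopes — exactly the observation used in the proof of Theorem~\ref{thm:disconnected}. Second, two octahedral faces sharing a square $2$-face must receive splits that agree along it. Assuming some octahedron $O$ is split non-trivially by $\Sigma$, one walks to an octahedral face sharing a square $2$-face with $O$, uses these two constraints to conclude that it too must be split, and iterates; the rigidity of $\mathrm{PG}(2,3)$ — any two points on a unique line, exactly four points per line and four lines per point — should then force, along such a chain, either a square $2$-face that would have to be subdivided (contradicting the first constraint) or an octahedron forced into two incompatible splits (contradicting the second). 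This is the same propagation mechanism as in the proof of Theorem~\ref{thm:structures}, now driven by the combinatorics of the projective plane rather than of the hypersimplex.

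The main obstacle is precisely this global bookkeeping: the graph on octahedral faces of $P_{\cP}$ with edges given by shared square $2$-faces is large, and one needs either an invariant preserved along its edges that obstructs every non-trivial split, or an exhaustive case analysis over the orbits of octahedral faces under $\mathrm{Aut}(\cP)=\mathrm{PGL}(3,3)$. In practice I expect this last step to be discharged by computer, for instance with \polymake: the reduction to the $3$-skeleton turns it into a finite problem, so one enumerates the octahedral faces, writes down the compatibility system governing their admissible splits under the two constraints above, and verifies that the all-trivial assignment is its only solution; equivalently, one computes the rays of the Pl\"ucker fan $\Dr(\cP)$ directly and checks that there are none. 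In either presentation the conceptual content is the reduction to the $3$-skeleton supplied by Theorem~\ref{thm:structures}, and what remains is a finite verification.
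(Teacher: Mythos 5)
Your reduction to the $3$-skeleton is legitimate: by the argument proving Theorem~\ref{thm:structures}, two matroid subdivisions of $P_\cP$ that agree on the $3$-skeleton coincide, so a subdivision splitting no octahedral face is trivial, and the dimension count $\dim\lin\Dr(\cP)=n-c=13-1=12$ is correct. But the decisive claim --- that no matroid subdivision of $P_\cP$ splits any of its $117$ octahedral faces --- is exactly the content of the proposition, and you do not prove it. Your propagation mechanism is only sketched (``should then force''), with no invariant identified and no case analysis over the orbits of octahedra under $\mathrm{PGL}(3,3)$ actually carried out; the fallback is an unexecuted \polymake computation whose outcome you merely conjecture. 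Note that the paper introduces $\cP$ precisely as an example where the octahedron criterion that works for binary matroids gives nothing (its polytope has many octahedral faces), so some genuinely new argument is required at this point; as written, your text replaces it with a plausibility argument plus a promissory note, which is a genuine gap rather than a proof.

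The paper closes this gap by a different, computation-free route: a maximal cell of a non-trivial matroid subdivision of $P_\cP$ would be the polytope of a proper connected submatroid of $\cP$ (a matroid on the same ground set all of whose bases are bases of $\cP$, connected because the cell is full-dimensional), and a five-step synthetic argument in the projective plane --- producing a parallel pair in the submatroid, collapsing two lines of $\cP$ to one line, collapsing a quadrilateral to a point, collapsing three concurrent lines to a line, and finally contradicting connectivity --- shows that no such submatroid exists; this even yields the stronger statement that $\cP$ has no proper connected submatroid at all. If you want to salvage your route, you must either actually perform and report the finite verification on the octahedral compatibility system (checking that the all-trivial assignment is its only solution, equivalently that $\Dr(\cP)$ has no rays), or supply the missing combinatorial invariant that obstructs every non-trivial split; until then the argument is incomplete.
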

\begin{proof}
	We will show the indecomposability by contradiction. We assume that $\cM$ is a proper connected submatroid of $\cP$. Being a submatroid means that every basis of $\cM$ is a basis of $\cP$.
	In this proof we will use the \emph{closure operators} of the matroids $\cM$ and $\cP$.
	Recall that the closure $\closure(S)$ of a  set $S$ is the maximal set that contains $S$ with $\rank(\closure(S)) = \rank(S)$. Here with maximal we mean that for every element $e\not\in \closure( S)$ we have  $\rank(e \cup \closure(S)) > \rank(S)$. 
	We denote by $\closure_{\cP}(S)$ the closure of $S$ in $\cP$ and by
	$\closure_{\cM}(S)$ the closure of $S$ in $\cM$.
	They satisfy $\closure_{\cP}(S)\subseteq\closure_{\cM}(S)$ whenever $\rank_{\cM}(S)=\rank_{\cP}(S)$.
	
	The proof consists of five steps:
	\begin{itemize}
	\item There are two parallel elements in $\cM$. 
	\item Two lines in $\cP$ collapse to a line in $\cM$.  
	\item A quadrilateral in $\cP$ collapses to a point in $\cM$. 
	\item Three concurrent lines in  $\cP$ collapse into a line in $\cM$.  
	\item Contradiction. 
	\end{itemize}
After each step, for improving the exposition, we reset the labeling. We make sure to clarify the new assigned labels. We do this in order to assure that there is no loss of generality. Keep in mind that for $\cM$ to be connected there is no line such that its complement is a single point. In particular there must be a least four points, i.e., four parallelism classes.

	Our first step is to show that $\cM$ contains a pair of parallel elements.
	Suppose that the set $123$ is a basis of $\cP$ but it is  dependent in $\cM$.
	Either $123$ contains a parallel pair or $\closure_{\cM}(123)$ is of rank $2$ as $\cM$ is loop free.
	In the latter case, let $4$ be not in the rank $2$ flat $\closure_{\cM}(123)$. 
	This implies that  the intersection of the lines  $\closure_{\cM}(14)\cap\closure_{\cM}(123)$ is of rank $1$ in $\cM$.
	As $2$ is not parallel to $3$ in $\cM,$ then $\closure_{\cP}(23)\subseteq \closure_{\cM}(23) = \closure_{\cM}(123)$ and, as $123$ is independent in $\cP$, there is an element $5$ in $\closure_{\cP}(23)\cap\closure_{\cP}(14)$. This means that  $5\in\closure_{\cM}(14)\cap\closure_{\cM}(123)$ and hence it is parallel to $1$ in $\cM$.

Suppose now that $1$ and $2$ are two parallel elements in $\cM$. Notice that there are at lest three elements not in $\closure_{\cM}(12)$. Moreover, $\closure_{\cP}(12)$ has four elements, at least two of which are in $\closure_{\cM}(12)$. Then there exists an element $3$ such that $3$ is not in $\closure_\cM(12) \cup \closure_\cP(12)$. Therefore, $\closure_{\cP}(13)$ and $\closure_{\cP}(23)$ are two different lines in $\cP$ which are contained in  $\closure_\cM(13) = \closure_\cM(23)$. 

	Suppose that the seven points on the two lines $1234$ and $1567$ in $\cP$ span a line in $\cM$. There must be at least two points $8$ and $9$ outside this line in the connected matroid $\cM$. Each of the three lines $\closure_\cP(28)$, $\closure_\cP(38)$ and $\closure_\cP(48)$ intersects the line $1567$ in a different point in the projective geometry $\cP$. This induces a bijection between $234$ and $567$ where elements are mapped to parallel elements in $\cM$. Similarly, a bijection can be constructed by considering the lines from $9$. These bijections do not agree and hence, there are at least four parallel elements in $\cM$ that span a quadrilateral in $\cP$.

Suppose that $1234$ is a quadrilateral in $\cP$ which collapses to a point in $\cM$. Let $5 \in\closure_\cP(12) \cap \closure_\cP(34)$, and $6 \in\closure_\cP(13) \cap \closure_\cP(24)$, and $7 \in\closure_\cP(14) \cap \closure_\cP(23)$. As $\cM$ is connected, there are at least three elements outside $\closure_{\cM}(1234)$. Suppose that these points are exactly  $5$, $6$ and $7$. Then $\closure_{\cP}(56) \cap \closure_{\cM}(1234) \not = \emptyset$ forcing $\closure_{\cM}(1234)$, $5$ and $6$ to be colinear in $\cM$, and $\cM$ disconnected. So there is another point $8$ outside $\closure_{\cM}(1234)$. In particular, three of the lines in $\cP$ passing through $8$ also pass through at least one point in the quadrilateral $1234$. Therefore they collapse in a single line in $\cM$. 

Suppose three concurrent lines passing through $1$ in $\cP$ collapse to a single line  in $\cM$. Let $S$ be the set of elements different from $1$ forming these three lines. As $\cM$ is connected there must be at least two elements outside $\closure_{\cM}(S)$. For each point, the lines passing through it and not $1$ induces a partition of $S$ in three subsets of size three, such that the elements in each subsets belong to the same parallelism class. The two partitions are transversal, therefore $S$ is in the same parallelism class. As the complement of $S$ is a line in $\cP$, then $\cM$ is disconnected and we obtain a contradiction. 
\end{proof}

\begin{remark}
We actually proved a stronger statement, namely that the matroid $\cP$ does not contain a proper connected submatroid. 
\end{remark}

We end this section by showing that a finest matroid subdivision of $\Dr(2,n)$ contains only indecomposable matroids.
\begin{proposition}\label{prop:subd_decomp}
	The cells of a finest matroid subdivision of $\Delta(2,n)$ correspond to binary matroids.
	In particular, they are indecomposable.
\end{proposition}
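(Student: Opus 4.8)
The plan is to describe the maximal cells of a finest matroid subdivision of $\Delta(2,n)$ explicitly, via the tree model of $\Dr(2,n)$. A finest matroid subdivision of $\Delta(2,n)$ is induced by a vector spanning a maximal cone of the Dressian $\Dr(2,n)$: by Theorem~\ref{proposition:speyer} the cones of the secondary fan structure are matched with the matroid subdivisions of $\Delta(2,n)$, compatibly with coarsening, so the finest subdivisions correspond to the maximal cones. Since $\Dr(2,n)=\Gr_p(2,n)$ and, by Speyer--Sturmfels \cite{SpeyerSturmfels:2004}, this fan is the space of phylogenetic trees with $n$ labelled leaves, every maximal cone is the cone $\sigma_T$ of a trivalent tree $T$ with leaf set $[n]$. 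For $n\le 3$ there is nothing to prove, so I would assume $n\ge 4$, hence $T$ is honestly trivalent.

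Next I would invoke the classical dictionary between $T$ and the subdivision $\Sigma_T$ it induces on $\Delta(2,n)$ (again following \cite{SpeyerSturmfels:2004}): the maximal cells of $\Sigma_T$ are indexed by the internal vertices of $T$, and the cell attached to an internal vertex $v$ is the matroid polytope of the loopless rank-$2$ matroid $\cM_v$ on $[n]$ whose parallel classes are the three sets $A_1(v),A_2(v),A_3(v)$ of leaves lying in the connected components of $T\setminus v$. This is the step that needs the most care, and it is where I expect the bulk of the work to lie: one must check that the $P_{\cM_v}$ are matroid polytopes whose edges are parallel to those of $\Delta(2,n)$, that they fit together into a polyhedral subdivision of $\Delta(2,n)$ as $v$ ranges over the internal vertices of $T$, and that this subdivision is the regular one induced by a tree metric supported on $T$, viewed as a tropical Pl\"ucker vector.

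Granting the dictionary, it remains to observe that each $\cM_v$ is binary, and this is immediate: a loopless rank-$2$ matroid with at most three parallel classes has no $\cU_{2,4}$-minor, because any restriction of it again has at most three parallel classes while any contraction of a non-loop drops the rank below $2$; hence it is binary by Proposition~\ref{prop:binary_minors}. Concretely, $\cM_v$ is obtained from $\cU_{2,3}$ --- the graphic matroid of a triangle, hence representable over $\FF_2$ --- by replacing each of its three elements by a nonempty parallel class, and adjoining parallel elements preserves $\FF_2$-representability. Therefore every cell of a finest matroid subdivision of $\Delta(2,n)$ is a binary matroid, and since binary matroids have already been shown to be indecomposable, the final assertion follows.

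For completeness I would mention an alternative that avoids the cell dictionary but relocates the difficulty rather than removing it. A maximal cell of a finest subdivision $\Sigma$ is a full-dimensional rank-$2$ matroid polytope $P_\cN$; if $\cN$ had four pairwise non-parallel elements $a,b,c,d$, then the face of $P_\cN$ minimizing $\sum_{i\notin\{a,b,c,d\}}x_i$ would be the octahedral face $\Delta_{\{a,b,c,d\}}(2,4)$ of $\Delta(2,n)$, occurring un-subdivided in $\Sigma$. One would then have to show that a finest matroid subdivision of $\Delta(2,n)$ subdivides every octahedral face --- using Corollary~\ref{cor:uniDressCone} to exclude the contrary --- and this last implication is precisely the main obstacle of that route.
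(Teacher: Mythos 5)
Your route is genuinely different from the paper's, and its conclusion is reachable this way, but as written it has two gaps. First, the opening step is not justified: Theorem~\ref{proposition:speyer} identifies points of $\Dr(2,n)$ only with \emph{regular} matroid subdivisions (those induced by a weight vector), whereas a ``finest matroid subdivision'' in the sense of this paper is any subdivision all of whose cells are matroid polytopes --- regularity is not part of the definition. So before you may say that your subdivision is induced by a vector in a maximal cone of $\Dr(2,n)$, you must show that a finest matroid subdivision of $\Delta(2,n)$ is regular; this is precisely the first sentence of the paper's proof, which uses that every matroid subdivision of $\Delta(2,n)$ is a sequence of compatible splits and hence regular (see \cite{HerrmannJoswig:2008,JoswigSchroeter:2017}). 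Second, the step you yourself flag as carrying the bulk of the work --- that the maximal cells of the subdivision attached to a trivalent tree $T$ are exactly the rank-$2$ matroids $\cM_v$, one per internal vertex $v$, whose parallel classes are the three branches at $v$ --- is invoked but not proved, and it is not stated in this form in \cite{SpeyerSturmfels:2004}. It can be extracted from the material recalled in Section~3 (for $d=2$ the tropical linear space $L_w$ is the tree $T$, and the maximal cells of the subdivision are the matroids $\cM_x$ at its internal vertices), but as it stands the central claim of your argument is an acknowledged black box. The remaining parts are fine: a loopless rank-$2$ matroid with at most three parallel classes has no $\cU_{2,4}$-minor, hence is binary by Proposition~\ref{prop:binary_minors}, and indecomposability then follows from the earlier corollary.

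For comparison, the paper argues as follows: regularity via split decomposition; realizability of the subdivision in characteristic $0$ since $\Dr(2,n)=\Gr_0(2,n)$; the observation that a finest subdivision has $n-2$ maximal cells; and then the Main Theorem of \cite{Speyer:2009}, which says that all cells of a subdivision induced by a point of $\Gr_0(d,n)$ attaining the bound of $\tbinom{n-2}{d-1}$ maximal cells are graphical, hence binary. Once your two gaps are filled, your tree-combinatorial argument is more self-contained (no input from Speyer's K-theoretic bound) and gives an explicit description of the cells as $\cU_{2,3}$ with elements replaced by parallel classes; the paper's argument is shorter given the cited machinery and is phrased so that it points toward the question, raised in the final section, about finest subdivisions of general hypersimplices. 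Your closing alternative via unsubdivided octahedral faces indeed relocates rather than removes the difficulty, as you say, and would still need the regularity step above.
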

\begin{proof}
	Every matroid subdivisions of the hypersimplex $\Delta(2,n)$ is regular, as it is a sequence of compatible splits, i.e., subdivisions that divide $\Delta(2,n)$ into two maximal cells. See \cite{JoswigSchroeter:2017} and \cite{HerrmannJoswig:2008} for further details. 
	Every regular matroid subdivision of $\Delta(2,n)$ is representable in characteristic $0$ as $\Dr(2,n)=\Gr_0(2,n)$. Moreover, it is a finest subdivision if and only if it has $n-2$ maximal cells. 
	The Main Theorem in \cite{Speyer:2009} states that all cells in a subdivision of the hypersimplex $\Delta(d,n)$ are graphical and therefore binary whenever the subdivision is induced by a Pl\"ucker vector in $\Gr_0(d,n)$ and has $\tbinom{n-2}{d-1}$ maximal cells. This applies to the finest subdivisions of $\Delta(2,n)$ and hence  the maximal cells are matroid polytopes of binary matroids and indecomposable.
\end{proof}

\section{Open Questions}
\noindent  Several questions arise from the last section. We  end this article by stating them and in particular, by making a conjecture. 

A class of possible indecomposable matroids comes directly from the previous section. We conjecture the following generalization of Proposition \ref{prop:indecomp}:
\begin{conjecture}
All matroids that arise from projective spaces over finite fields are indecomposable. 
\end{conjecture}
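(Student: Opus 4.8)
Since $PG(r-1,q)$ is a connected matroid for $r\geq 2$, the plan is to prove the stronger statement that, for $r\geq 3$ and any prime power $q$, the matroid $PG(r-1,q)$ has no proper connected \emph{submatroid} — meaning, as in the proof of Proposition~\ref{prop:indecomp}, a matroid on the same ground set all of whose bases are bases of $PG(r-1,q)$, and which has strictly fewer bases. This suffices: if $P_\cM$ admitted a nontrivial matroid subdivision, then by Corollary~\ref{corollary:localSpeyer} each of its (at least two) maximal cells would be a matroid polytope $P_{\cM'}$ with $\cM'$ a proper submatroid of $\cM$, and since $\dim P_{\cM'}=\dim P_\cM$ the matroid $\cM'$ has the same number of connected components as $\cM$ by the dimension formula $\dim P_\cM=n-c$; hence $\cM'$ is connected when $\cM$ is. Thus a connected matroid with no proper connected submatroid is indecomposable. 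The case $q=2$ is already settled, since $PG(r-1,2)$ is binary and binary matroids are indecomposable (the corollary after Proposition~\ref{prop:binary_minors}); and $r\geq 3$ is genuinely needed, as $PG(1,q)=\cU_{2,q+1}$ is decomposable for $q\geq 3$. So we may assume $q\geq 3$ and $r\geq 3$.

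For the base case $r=3$ the plan is to mimic the five-step scheme of the proof of Proposition~\ref{prop:indecomp}, now over an arbitrary projective plane $\cP=PG(2,q)$: from a proper connected submatroid $\cM$ one shows in turn that (i) $\cM$ contains a parallel pair; (ii) two distinct lines of $\cP$ have the same closure in $\cM$; (iii) a quadrilateral of $\cP$ collapses to a single point of $\cM$; (iv) the pencil of lines of $\cP$ through a common point collapses to a single line of $\cM$; and (v) the complement of the support of that pencil is a flat of $\cP$ of rank two, forcing a separator and hence $\cM$ disconnected, a contradiction. Every incidence fact used in Proposition~\ref{prop:indecomp} (two points lie on a unique line, two lines meet, each line has $q+1$ points, each point lies on $q+1$ lines) holds for every $q$, so the argument should carry over with the numerals $3$ and $4$ replaced by $q$ and $q+1$; the step needing the most care is (i), where one must show that a dependency of $\cM$ of least rank among bases of $\cP$ is forced to contain a parallel pair.

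For $r\geq 4$ the plan is to induct on the rank. Given a proper connected submatroid $\cM$ of $\cP=PG(r-1,q)$, pass to a minor of corank one: for a point $p$ the contraction $\cM/p$ is a submatroid of $\cP/p\cong PG(r-2,q)$, and for a hyperplane $H$ the restriction $\cM|_H$ is a submatroid of $\cP|_H\cong PG(r-2,q)$. If for some choice the minor is proper \emph{and} connected, the inductive hypothesis applies and we are done. The structural engine making such a choice available should be the modularity of $PG(r-1,q)$ — any two flats form a modular pair, and in particular every line meets every hyperplane — which is what prevents collapses, just as plane incidence did in rank three. A natural dichotomy to organize this: if $\cM$ agrees with $\cP$ on every rank-three flat, then by Corollary~\ref{cor:uniDressCone} the matroid subdivision witnessing any decomposition is trivial on the $3$-skeleton, hence trivial, so $\cM=\cP$; otherwise some rank-three flat $F$ carries a proper submatroid $\cM|_F\subsetneq PG(2,q)$, and one propagates this upward using the connectivity arguments of the base case.

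The main obstacle is exactly this bookkeeping in the inductive step: a proper submatroid need not remain proper, nor a connected submatroid remain connected, under an arbitrary contraction or restriction, and the difficulty is to guarantee that \emph{some} corank-one minor preserves both properties simultaneously, or else to derive the contradiction directly from the $3$-skeleton via Corollary~\ref{cor:uniDressCone}. Making that choice uniform across all ranks and orders — in particular controlling connectivity of the minor — is the heart of the problem and the reason the statement is posed here as a conjecture rather than a theorem.
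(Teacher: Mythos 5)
This statement is stated in the paper as a conjecture: the paper itself contains no proof, only the special case of the ternary projective plane (Proposition~\ref{prop:indecomp}), whose proof your plan rightly takes as a template. Your proposal is likewise not a proof but a program, and you say so yourself; the honest assessment is that the gap you name at the end is real and is exactly where the difficulty lies. Your reduction is sound as far as it goes: maximal cells of a nontrivial matroid subdivision of $P_\cM$ are full-dimensional matroid polytopes on the same ground set, so by the dimension formula $\dim P_\cM=n-c$ they are proper \emph{connected} submatroids when $\cM$ is connected, which is precisely the stronger statement the paper proves for $\cP=PG(2,3)$ (see the remark after Proposition~\ref{prop:indecomp}). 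Extending the five-step incidence argument from $q=3$ to all $PG(2,q)$ is plausible but not automatic; several steps use explicit counts (sizes of lines, of pencils, the transversality of the two partitions in the last step), and each would have to be redone for general $q$ rather than asserted to "carry over."

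The inductive step for $r\geq 4$ contains concrete errors beyond the acknowledged bookkeeping problem. First, $\cP/p$ is \emph{not} isomorphic to $PG(r-2,q)$: only its simplification is; the contraction has every point replaced by a parallel class of size $q$, so "a submatroid of $PG(r-2,q)$" is not what you obtain, and the inductive hypothesis (no proper connected submatroid of a projective geometry) does not apply to it directly. Second, the proposed dichotomy via rank-three flats conflates flats of $\cM$ with $3$-dimensional faces of $P_\cP$: the octahedral $3$-faces of a matroid polytope correspond to certain rank-$2$, corank-$2$ minors (the $\cU_{2,4}$-minors), not to rank-three flats, so "agreeing with $\cP$ on every rank-three flat" does not translate into "the subdivision is trivial on the $3$-skeleton," and Corollary~\ref{cor:uniDressCone} (or the matroid-polytope analogue inside the proof of Theorem~\ref{thm:structures}) cannot be invoked that way without a genuine additional argument. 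Third, and most importantly, you give no mechanism guaranteeing that \emph{some} corank-one minor of a proper connected submatroid remains both proper and connected; without that, the induction never gets off the ground. So the proposal correctly identifies a promising strategy modeled on the paper's rank-three case, but as it stands it does not prove the conjecture, and no proof exists in the paper to compare it with.
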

Notice that a direct consequence would be examples of indecomposable matroids which are only representable over a particular characteristic.
While the direct sum of the ternary projective plane and the binary projective plane, i.e., fano matroid,  is an indecomposable matroid which is not representable over any field. We also  want to remark that as the ternary projective plane $\cP$ has a decomposable minor $\cU_{2,4}$,  a classification of indecomposable matroids can not relay on excluded minors.

Moreover, it would be interesting to find an efficient criterion to check indecomposability.  In Proposition~\ref{prop:indecomp} we used that there does not exist a connected submatroid. We wonder whether a submatroid is in general a certificate of decomposability.
\begin{question}
Does there exist two connected matroids $\cM$ and $\cM'$ such that $P_{\cM'}$ is
strictly contained in $P_\cM$ but no matroid subdivision of $P_\cM$ has
$P_{\cM'}$ as a cell?
\end{question}
Notice that when $\cM$ is a uniform matroid then the corank subdivision has $P_{\cM'}$ as a cell. 
But the corank function of $M'$ does not necessarily satisfy local Pl\"ucker relations.
\begin{example}
	Let $\cM$ be a matroid with bases $12$, $13$, $14$, $23$ and $24$ and
	$\cM'$ be the matroid with the two bases $12$ and $13$. Then the local corank lifting is $w = (0,0,1,1,1)$ and this vector is not in the local Dressian $\Dr(\M)$ as it subdivides the square pyramid $P_\cM$ into two tetrahedra.
\end{example}

Our last question is about finest matroid subdivisions of hypersimplicies and is derived from Proposition \ref{prop:subd_decomp}.
\begin{question}
	Are all cells in a finest matroid subdivision of a hypersimplex matroid polytopes of indecomposable matroids?
\end{question}

\end{document}